\let\c@author\relax
\newtheorem{theorem}{Theorem}[section]
\newtheorem{lemma}[theorem]{Lemma}
\newtheorem{corollary}[theorem]{Corollary}
\newtheorem{conjecture}[theorem]{Conjecture}
\theoremstyle{definition}
\newtheorem{definition}[theorem]{Definition}
\DeclareMathOperator{\LS}{RP}
\DeclareMathOperator{\ILS}{ILS}
\def \Z {\mathbb Z}
\newlength\mylenA
\newlength\mylenB
\newlength\mylenC
\def\ps@pprintTitle{%
  \let\@oddhead\@empty
  \let\@evenhead\@empty
  \def\@oddfoot{\reset@font\hfil\thepage\hfil}
  \let\@evenfoot\@oddfoot
}
\title{Latin squares with three disjoint subsquares of the same order}
\begin{document}

\author*[]{\fnm{Tara} \sur{Kemp}}\email{t.kemp@uq.net.au}

\author[]{\fnm{James G.} \sur{Lefevre}}\email{j.lefevre@uq.edu.au}

\affil[]{\orgdiv{School of Mathematics and Physics, ARC Centre of Excellence, Plant Success in Nature and Agriculture}, \orgname{The University of Queensland}, \orgaddress{\city{Brisbane}, \postcode{4072}, \state{Queensland}, \country{Australia}}}

\abstract{Given an integer partition $P = (h_1h_2\dots h_k)$ of $n$, a realization of $P$ is a latin square with disjoint subsquares of orders $h_1,h_2,\dots,h_k$. Most known results restrict either $k$ or the number of different integers in $P$. There is little known for partitions with arbitrary $k$ and subsquares of at least three orders. It has been conjectured that if $h_1=h_2=h_3\geq h_4\geq\dots\geq h_k$ then a realization of $P$ always exists. We prove this conjecture, and thus show the existence of realizations for many general partitions.}

\keywords{latin square, disjoint subsquares, realization}

\maketitle

\section{Preliminaries}

A \emph{latin square} of order $n$ is an $n\times n$ array $L$ filled with symbols from $[n]=\{1,2,\dots,n\}$ such that each symbol occurs exactly once in every row and column. A \emph{subsquare} is an $m\times m$ subarray of $L$ which is itself a latin square of order $m$ on some set of $m$ symbols. Subsquares are disjoint if they share no rows, columns or symbols.

The array in \Cref{fig: square example} is a latin square of order 9 with disjoint subsquares of orders 1, 2, and 3.

\begin{figure}[h]
    \centering
$$\arraycolsep=4pt\begin{array}{|c|c|c|c|c|c|c|c|c|}\hline
\cellcolor{lightgray}1 & \cellcolor{lightgray}2 & \cellcolor{lightgray}3 & 8 & 6 & 5 & 4 & 9 & 7 \\ \hline
\cellcolor{lightgray}2 & \cellcolor{lightgray}3 & \cellcolor{lightgray}1 & 7 & 9 & 4 & 5 & 6 & 8 \\ \hline
\cellcolor{lightgray}3 & \cellcolor{lightgray}1 & \cellcolor{lightgray}2 & 6 & 7 & 9 & 8 & 5 & 4 \\ \hline
7 & 6 & 9 & \cellcolor{lightgray}4 & \cellcolor{lightgray}5 & 8 & 1 & 3 & 2 \\ \hline
6 & 7 & 8 & \cellcolor{lightgray}5 & \cellcolor{lightgray}4 & 1 & 9 & 2 & 3 \\ \hline
9 & 8 & 5 & 3 & 2 & \cellcolor{lightgray}6 & \cellcolor{lightgray}7 & 4 & 1 \\ \hline
8 & 9 & 4 & 2 & 3 & \cellcolor{lightgray}7 & \cellcolor{lightgray}6 & 1 & 5 \\ \hline
5 & 4 & 7 & 9 & 1 & 3 & 2 & \cellcolor{lightgray}8 & 6 \\ \hline
4 & 5 & 6 & 1 & 8 & 2 & 3 & 7 & \cellcolor{lightgray}9 \\ \hline
\end{array}$$
    \caption{A latin square of order 9 with disjoint subsquares.}
    \label{fig: square example}
\end{figure}

Given an integer partition $P=(h_1\dots h_k)$ of $n$, a \emph{realization} of $P$, denoted $\LS(h_1\dots h_k)$, is a latin square of order $n$ with pairwise disjoint subsquares of orders $h_1,\dots,h_k$. The latin square in \Cref{fig: square example} is a realization of $(3^12^21^2)$. Realizations are also known as \emph{partitioned incomplete latin squares} (PILS).

Unless otherwise stated, we assume that $h_1\geq h_2\geq \dots\geq h_k > 0$. Also, the partition notation $(h_1^{m_1}h_2^{m_2}\dots h_\ell^{m\ell})$ represents a partition with $m_i$ parts of size $h_i$ for all $i\in[\ell]$. A realization is in normal form if the subsquares appear along the main diagonal, the $i^{th}$ subsquare is of order $h_i$, and for $i<j$ the symbols from the $i^{th}$ subsquare are less than the symbols from the $j^{th}$ subsquare.

L.~Fuchs first asked about the existence of realizations in terms of quasigroups with disjoint subquasigroups \cite{keedwell2015latin}. The problem of determining existence is partially solved, with most results concerning partitions with at most five parts or partitions with integers of at most two sizes.

Heinrich \cite{heinrich2006latin} determined existence for realizations with at most four disjoint subsquares, and Kemp \cite{kemp2024latin} extended this result to five disjoint subsquares.  D{\'e}nes and P{\'a}sztor \cite{denes1963some}, while studying quasigroups, completed the case with subsquares of only one size. Kuhl et al.~\cite{kuhl2018latin} completed the work started by Heinrich \cite{heinrich1982disjoint} on realizations with subsquares of at most two sizes.

We make use of the following results:

\begin{theorem}[\cite{heinrich2006latin}]
\label{thm: small k squares}
    Take a partition $(h_1h_2\dots h_k)$ of $n$ with $h_1\geq h_2\geq \dots\geq h_k > 0$. Then an $\LS(h_1h_2\dots h_k)$
    \begin{itemize}
        \item always exists when $k=1$;
        \item never exists when $k=2$;
        \item exists when $k=3$ if and only if $h_1=h_2=h_3$;
        \item exists when $k=4$ if and only if $h_1=h_2=h_3$, or $h_2=h_3=h_4$ with $h_1\leq 2h_4$.
    \end{itemize}
\end{theorem}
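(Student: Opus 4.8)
The plan is to establish the two directions separately, dispatching the four values of $k$ in turn; I expect only one family of constructions (for $k=4$) to be genuinely hard. The cases $k=1$ and $k=2$ are immediate. A latin square of order $n=h_1$ is an $\LS(h_1)$, so existence holds and there is nothing to forbid. For $k=2$ we have $h_1+h_2=n$, and the $h_1\times h_2$ rectangle in the rows of subsquare $1$ and the columns of subsquare $2$ can hold no symbol of subsquare $1$ (those are exhausted inside subsquare $1$'s own columns) and no symbol of subsquare $2$ (exhausted inside subsquare $2$'s own rows), so it cannot be filled.

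For the nonexistence statements with $k\in\{3,4\}$ I would record two facts about a normal-form realization. First, a \emph{rectangle bound}: for $i\neq j$ the $h_i\times h_j$ rectangle $R_{ij}$ cut out by the rows of subsquare $i$ and the columns of subsquare $j$ uses only symbols of the remaining subsquares (for the reason just given), so $h_j\le\sum_{t\neq i,j}h_t$. For $k=3$ this with $(i,j)=(2,1)$ forces $h_1\le h_3$, hence $h_1=h_2=h_3$; for $k=4$ it gives $h_1\le h_3+h_4$. Second, for $k=4$, a \emph{symbol count}: each symbol $s$ of subsquare $3$ has all of its occurrences in the columns of subsquare $4$ lying in the rows of subsquares $1$ and $2$ (it cannot appear in a subsquare-$4$ column within the rows of subsquare $3$ or of subsquare $4$), so $s$ appears exactly $h_1+h_2-h_4$ times in $R_{12}\cup R_{21}$; summing over the $h_3$ symbols of subsquare $3$, then running the mirror count with subsquares $3$ and $4$ interchanged, and using that $R_{12}$ and $R_{21}$ are filled entirely from subsquares $3$ and $4$, one obtains $2h_1h_2+2h_3h_4=(h_1+h_2)(h_3+h_4)$, equivalently $(h_1-h_3)(h_2-h_4)+(h_1-h_4)(h_2-h_3)=0$. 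Since the ordering makes both products nonnegative, both vanish: $(h_1-h_4)(h_2-h_3)=0$ gives $h_1=h_2=h_3=h_4$ or $h_2=h_3$, and then $(h_1-h_3)(h_2-h_4)=0$ sharpens this to $h_1=h_2=h_3$ or $h_2=h_3=h_4$, the latter combined with the rectangle bound $h_1\le h_3+h_4=2h_4$.

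For existence, the case $h_1=h_2=h_3=m$ (with $k=3$, or $k=4$ and any $h_4\le m$) I would handle with a frame-type template built on an idempotent latin square of order $3$ (or $4$): its diagonal cells mark the locations of the equal subsquares, each symbol is blown up to a group of $m$ rows, columns and symbols, each diagonal block is filled with a latin square of order $m$, and each off-diagonal block with a latin square of order $m$ on the symbol group dictated by the pattern; because the pattern is itself latin, this is automatically an $\LS(m^3)$, resp.\ $\LS(m^4)$. When $k=4$ and $h_4<m$, the fourth subsquare cannot sit inside any of the three equal blocks (disjointness puts it on the remaining $h_4$ rows, columns and symbols), so instead I would produce an incomplete latin square of order $3m+h_4$ carrying three disjoint order-$m$ subsquares together with an order-$h_4$ hole on those remaining coordinates, and complete it with a latin square of order $h_4$; such incomplete objects are supplied by standard incomplete-latin-square constructions. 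The remaining case, $h_2=h_3=h_4=m$ with $h_1\le 2m$, admits no clean uniform template: here $n=h_1+3m$ and $h_1$ may be almost $2m$, and I would assemble the realization from incomplete-latin-square pieces of order roughly $h_1+m$ whose holes host subsquare $1$ and one order-$m$ subsquare, onto which the other two order-$m$ subsquares are grafted, with the hypothesis $h_1\le 2m$ playing exactly the role of the feasibility condition for the piece that binds subsquare $1$ to two of the order-$m$ subsquares (the same inequality the rectangle bound makes necessary). This construction will break into subcases according to the size of $h_1$ relative to $m$.

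The main obstacle I expect is precisely this last family. The case $h_1=h_2=h_3$ has the transparent product/frame structure above, but putting four pairwise-disjoint subsquares on $h_1+3m$ cells with $h_1$ as large as $2m$ demands a hands-on assembly, and the real work is verifying --- uniformly over the subcases, and right at the boundary $h_1=2m$ --- that the assembled array is latin and that all four subsquares are simultaneously present and pairwise disjoint.
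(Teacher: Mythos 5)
The paper does not prove this statement at all: it is imported verbatim from Heinrich's survey with a citation, so there is no internal proof to compare against, and your attempt has to be judged against what the theorem actually requires. Your nonexistence direction is complete and correct. The rectangle bound $h_j\le\sum_{t\ne i,j}h_t$ is the standard necessary condition; your symbol count for $k=4$ is right (for a symbol $s$ of subsquare $3$, the row and column counts give $N_{12}(s)+N_{21}(s)=h_1+h_2-h_4$ exactly as you claim); and the resulting identity $2h_1h_2+2h_3h_4=(h_1+h_2)(h_3+h_4)$ does factor as $(h_1-h_3)(h_2-h_4)+(h_1-h_4)(h_2-h_3)=0$, so the ordering forces both summands to vanish and yields $h_1=h_2=h_3$ or $h_2=h_3=h_4$ with $h_1\le 2h_4$. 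The $k\le 2$ cases and the blow-up of an idempotent latin square of order $3$ or $4$ for $\LS(m^3)$ and $\LS(m^4)$ are also fine.

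The gap is in the existence direction for the two hard families, and it is genuine rather than cosmetic. For $\LS(m^3h_4)$ with $0<h_4<m$ you appeal to an incomplete latin square of order $3m+h_4$ carrying three disjoint order-$m$ subsquares together with an order-$h_4$ hole, ``supplied by standard incomplete-latin-square constructions''; but that object is precisely an $\LS(m^3h_4)$ with its fourth subsquare deleted, so the appeal is circular. To make it work you would need to exhibit, for example, $h_4$ disjoint transversals of the blown-up $\LS(m^3)$ each avoiding all three diagonal blocks, so that a prolongation creates the order-$h_4$ subsquare; the existence of such holey transversals is exactly the nontrivial content and is not argued. For $h_2=h_3=h_4=m$ with $m<h_1\le 2m$ the proposal is an unspecified assembly from incomplete pieces of order ``roughly $h_1+m$'', with no statement of which pieces are used, why they exist, or how the extremal case $h_1=2m$ is handled; you correctly identify this as the hard part but do not supply the construction. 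As written, the proposal establishes the ``only if'' halves and the cases $k=3$ and $h_1=\dots=h_k$, but not the full theorem.
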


\begin{theorem}[\cite{denes1963some}]
\label{thm: a^k square}
    For $k\geq 1$ and $a\geq 1$, an $\LS(a^k)$ exists if and only if $k\neq 2$.
\end{theorem}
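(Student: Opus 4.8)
The necessity of $k\neq 2$ requires no new work: an $\LS(a^2)$ would be a realization of a partition with two parts, which \Cref{thm: small k squares} forbids. (If one wants a self-contained argument, let the two order-$a$ subsquares occupy the complementary row sets $R_1,R_2$, column sets $C_1,C_2$ and symbol sets $S_1,S_2$; reading the block on rows $R_1$ and columns $C_2$ along its rows forces every entry into $S_2$, while reading it along its columns forces every entry into $S_1$, contradicting $a\geq 1$.) So the task is to construct an $\LS(a^k)$ for every $a\geq 1$ and every $k\neq 2$, which I would do in two steps: a \emph{seed} step producing an $\LS(1^k)$, and an \emph{inflation} step turning any $\LS(1^k)$ into an $\LS(a^k)$.

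For the inflation step, observe that an $\LS(1^k)$ is exactly a Latin square $N$ of order $k$ carrying a transversal; after permuting rows and columns we may take the transversal to be $\{(i,i):i\in[k]\}$, so $\tau(i):=N(i,i)$ is a permutation of $[k]$. Fix any Latin square $Q$ of order $a$ on $[a]$ and define a square $L$ of order $ak$ with rows, columns and symbols indexed by $[k]\times[a]$ via $L\big((i,x),(j,y)\big)=\big(N(i,j),Q(x,y)\big)$. Since each $N(i,\cdot)$ and each $Q(x,\cdot)$ is a bijection, $L$ is a Latin square; and for each $i$ the block on rows and columns $\{i\}\times[a]$ equals $\{(\tau(i),Q(x,y)):x,y\in[a]\}$, an order-$a$ subsquare supported on the symbol set $\{\tau(i)\}\times[a]$. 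These $k$ subsquares pairwise share no row, no column and---since $\tau$ is injective---no symbol, so $L$ is an $\LS(a^k)$. (When $k=1$ this just returns $L=Q$.) This reduces the theorem to producing an $\LS(1^k)$ for every $k\neq 2$.

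For the seed, when $k$ is odd I would use the addition table of $\Z_k$: its main diagonal has entries $2i$, which are distinct because doubling is a bijection of $\Z_k$, so the diagonal is a transversal. For even $k\geq 4$ a cyclic group fails (an even-order cyclic group has no transversal in its Cayley table), so I would instead prolong the odd-order square $M={}$ addition table of $\Z_{k-1}$: replace each diagonal entry $M(i,i)=2i$ by a new symbol $\star$, append a new row and column carrying the displaced values $2i$, and put $\star$ in the new corner cell; call the result $M'$, a Latin square of order $k$. A broken diagonal of $M$ is a transversal of $M$ that misses the main diagonal (its entries $2i+1$ are again distinct), and appending the corner cell $M'(k-1,k-1)=\star$ turns it into a transversal of $M'$. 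The case $k=1$ is trivial.

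The Latin-square verifications for $L$ and for $M'$ are routine bookkeeping. The point that actually needs thought---the main obstacle---is the even case of the seed: no group table is available in general (already for $k=6$, by the Hall--Paige theorem no group of that order has a transversal in its Cayley table), so one must prolong an odd-order cyclic table while keeping track of a transversal disjoint from the main diagonal, so that it survives the prolongation. Everything else is mechanical.
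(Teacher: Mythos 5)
The paper does not prove this statement; it is imported as a known result of D\'enes and P\'asztor \cite{denes1963some}, so there is no internal proof to compare against. Your argument is correct and is essentially the classical one: an $\LS(1^k)$ is precisely a Latin square of order $k$ with a transversal (equivalently, after relabelling, an idempotent quasigroup of order $k$), such objects exist exactly for $k\neq 2$ via the cyclic table for odd $k$ and prolongation for even $k\geq 4$, and the direct product with any order-$a$ Latin square inflates the transversal cells into $k$ disjoint order-$a$ subsquares. All the individual verifications you defer (the prolongation $M'$ being Latin, the broken diagonal surviving as a transversal, the product square's blocks being disjoint subsquares) do go through, and the necessity argument for $k=2$ is the standard row/column symbol-count contradiction. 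The only caveat is presentational: since the paper treats this as a black-box citation, a reproof is not needed for the paper's logic, but as a self-contained argument yours is sound.
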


\begin{theorem}[\cite{heinrich1982disjoint,kuhl2018latin}]
\label{thm: squaresatmost2}
    For $a>b>0$ and $k>4$, an $\LS(a^ub^{k-u})$ exists if and only if $u\geq 3$, or $0< u < 3$ and $a\leq (k-2)b$.
\end{theorem}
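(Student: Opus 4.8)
\emph{Proof proposal.} Write $n=ua+(k-u)b$ and take $1\le u\le k$ (when $u=0$ the partition has a single size and \Cref{thm: a^k square} applies). My plan is to treat the two implications separately: necessity by a counting argument on where symbols are forced to go, and sufficiency by induction on $k$ (with \Cref{thm: small k squares} as the base $k=4$) driven by a small toolkit of recursive constructions, supplemented by explicit constructions for the cases the induction cannot reach.

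\textbf{Necessity.} When $u\ge 3$ there is no constraint to verify beyond existence (and $u=k$ is covered by \Cref{thm: a^k square}), so I would suppose $u\in\{1,2\}$ and that a realization $L$ exists, and deduce $a\le(k-2)b$. For $u=2$, let $S,S'$ be the two order-$a$ subsquares, on disjoint symbol sets $A,A'$, and examine the $a\times a$ subarray $R$ lying in the rows of $S$ and the columns of $S'$. Within the rows of $S$ a symbol of $A$ occurs only in columns of $S$, and $S$ and $S'$ share no columns, so no symbol of $A$ appears in $R$; symmetrically none of $A'$ does. Hence $R$ is filled from the $(k-2)b$ symbols belonging to the order-$b$ subsquares, and as each of its $a$ columns needs $a$ distinct symbols, $a\le(k-2)b$. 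For $u=1$, let $S$ (symbols $A$) be the order-$a$ subsquare and $T_1,\dots,T_{k-1}$ (symbols $B_1,\dots,B_{k-1}$) the order-$b$ ones. The region $C$ outside the rows and the columns of $S$ is $(k-1)b\times(k-1)b$ and decomposes into $(k-1)^2$ blocks of order $b$, the $k-1$ diagonal ones being the $T_i$ and carrying only symbols of the respective $B_i$. A symbol $\alpha\in A$ appears once in each of the $n-a=(k-1)b$ rows outside $S$, and in such a row it cannot lie in a column of $S$ (there $\alpha$ occurs only within the rows of $S$), so all $(k-1)b$ of its occurrences lie in $C$ and none in a diagonal block. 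Thus the $(k-1)(k-2)b^2$ off-diagonal cells of $C$ must accommodate the $a(k-1)b$ occurrences of symbols of $A$, whence $a\le(k-2)b$.

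\textbf{Sufficiency.} For the ``if'' direction I would argue by induction on $k$, taking \Cref{thm: small k squares} for the base $k=4$ and using two standard moves for the step. The first is \emph{hole-filling}: an order-$m$ subsquare of a realization is itself a Latin square on $m$ symbols, so it may be overwritten by any $\LS(h_1\cdots h_j)$ of order $m$, turning an $\LS(m^1g_1\cdots g_t)$ into an $\LS(h_1\cdots h_jg_1\cdots g_t)$. The second is \emph{inflation}: replacing each symbol of an $\LS(g_1\cdots g_t)$ by a block of $c$ symbols and each cell by a $c\times c$ Latin square produces an $\LS((cg_1)\cdots(cg_t))$ of order $c\sum_i g_i$. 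For instance, since $\LS(a^3c^1)$ exists exactly when $c\le 2a$ (\Cref{thm: small k squares}), whenever $u=3$ and $(k-3)b\le 2a$ one fills the order-$(k-3)b$ hole of $\LS(a^3((k-3)b)^1)$ with $\LS(b^{k-3})$ (available by \Cref{thm: a^k square} once $k-3\ne 2$) to obtain $\LS(a^3b^{k-3})$; and $\LS(a^u)$ exists for every $u\ge 3$ (\Cref{thm: a^k square}) as a further starting point. The key structural point is that the constraint $u\ge 3$ is preserved by all of these operations, so for $u\ge 3$ the induction meets no side condition and reduces to the small cases. For $u\in\{1,2\}$ the same toolkit must be applied compatibly with $a\le(k-2)b$: when $a$ is comfortably below the bound the reductions to smaller $k$ stay valid, and when $a$ is near $(k-2)b$ one instead writes $a=qb+r$ with $0\le r<b$ and distributes the excess over several order-$b$ subsquares using the realizations with three or more equal parts that \Cref{thm: small k squares} makes available precisely in that range.

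\textbf{Main obstacle.} The difficulty is concentrated at the boundary $a\approx(k-2)b$ with $u\in\{1,2\}$: there the side condition is tight, so the obvious move of peeling off an order-$b$ subsquare --- which would require the strictly stronger $a\le(k-3)b$ --- is unavailable, and one must instead cover a whole window of residues of $a$ modulo $b$ by explicit difference-family or small-order constructions and then verify that these seeds, together with hole-filling and inflation, reach every admissible triple $(u,a,b)$ and every small $k$. Assembling this finite stock of base constructions and checking that it covers every case is the technical heart of the result, and is what occupies \cite{heinrich1982disjoint} and \cite{kuhl2018latin}.
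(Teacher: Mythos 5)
This statement is quoted from \cite{heinrich1982disjoint} and \cite{kuhl2018latin}; the paper gives no proof of it, so there is nothing internal to compare your argument against. Judged on its own terms, your necessity argument is correct and complete: for $u=2$ the $a\times a$ block in the rows of one large subsquare and the columns of the other excludes both large symbol sets and must be filled column-by-column from the $(k-2)b$ remaining symbols, and for $u=1$ the count of occurrences of the $a$ large symbols in the off-diagonal blocks of the complementary $(k-1)b\times(k-1)b$ region gives $a(k-1)b\le(k-1)(k-2)b^2$. Both yield $a\le(k-2)b$.

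The sufficiency direction, however, is not a proof but a plan with the hard part explicitly deferred. Concretely: (i) for $u\ge 3$ your only worked route is to fill the hole of an $\LS(a^3((k-3)b)^1)$, which requires $(k-3)b\le 2a$ and $k-3\ne 2$; when there are many small subsquares relative to $a$ (e.g.\ $\LS(2^3 1^7)$, where $(k-3)b=7>4=2a$) the stated toolkit does not produce the realization, and you give no alternative beyond asserting that ``the induction meets no side condition.'' (ii) For $u\in\{1,2\}$ near the boundary $a=(k-2)b$ you acknowledge that peeling off an order-$b$ subsquare fails and gesture at ``difference-family or small-order constructions'' and at ``distributing the excess,'' but no such construction is exhibited, and this is precisely the content of the cited papers. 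So the ``if'' direction remains unproven: you have correctly identified where the difficulty lies, but identifying it is not the same as resolving it. If your goal is a self-contained proof, you would need to supply explicit base constructions for the tight cases and a verified induction covering all $(u,a,b,k)$; otherwise the theorem should simply be cited, as the paper does.
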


Colbourn gave two conjectures about realizations in \cite{colbourn2018latin}. They concern two families of partitions for which realizations are believed to always exist. In this paper we prove the following conjecture.

\begin{conjecture}[Conjecture 1.8 of \cite{colbourn2018latin}]
\label{conj: largest 3}
    If $k\geq 3$, then an $\LS(h_1^3h_4\dots h_k)$ exists.
\end{conjecture}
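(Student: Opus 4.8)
The plan is to peel off the cases already handled in the literature, reducing to a hard core of partitions, and then settle those by induction on $k$ using the basic operation of refining (``filling in'') the subsquares of a realization. For the reductions: if $k\le 4$ then $\LS(P)$ exists by \Cref{thm: small k squares} because $h_1=h_2=h_3$; if $k=5$ then $\LS(h_1^3h_4h_5)$ exists by Kemp's determination of all five-subsquare realizations \cite{kemp2024latin}; and if $k\ge 6$ but $P$ has at most two distinct part sizes, then the larger size has multiplicity at least three (as $h_1=h_2=h_3$), so $\LS(P)$ exists by \Cref{thm: a^k square} (one size) or \Cref{thm: squaresatmost2} (two sizes, with $u\ge 3$). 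Thus it suffices to prove the conjecture for $k\ge 6$ with at least three distinct part sizes; set $a=h_1$.

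The main tool is the standard filling construction: if $\LS(t_1\dots t_s)$ exists and each $t_i$ carries a partition $\pi_i$ that admits a realization (a latin square of order $t_i$ with disjoint subsquares of the orders listed in $\pi_i$), then $\LS(\pi_1\cup\dots\cup\pi_s)$ exists — one simply overwrites the $i$-th subsquare, which occupies $t_i$ rows, $t_i$ columns and $t_i$ symbols, by the chosen realization of $\pi_i$ on those same rows, columns and symbols. The one restriction is that each $\pi_i$ be realizable on its own; by \Cref{thm: small k squares} this rules out two-part $\pi_i$ and forces any three-part $\pi_i$ to have equal parts. This ``two-part obstruction'' is what makes the argument delicate.

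For the induction, assume the conjecture for all smaller numbers of parts and let $P=(a^3h_4\dots h_k)$ with $k\ge 6$ and at least three distinct sizes. I would partition the tail $(h_4,\dots,h_k)$ into blocks $G_1,\dots,G_r$ with $r<k-3$, each of total size $g_j\le a$ and each realizable as a partition of $g_j$; then $Q=(a^3g_1\dots g_r)$ has fewer than $k$ parts and three equal largest parts, so $\LS(Q)$ exists by the inductive hypothesis or a base case, and filling the $g_j$-subsquares of $\LS(Q)$ with realizations of the $G_j$ yields $\LS(P)$. The blocks are built greedily: whenever a value $v$ occurs at least three times among the tail parts with $3v\le a$ it can be packed into a triple (realizable by \Cref{thm: a^k square}), and otherwise parts are kept as singletons; the lemma to establish is that such a grouping can always be chosen with $r<k-3$, with no block of exactly two parts, and with no three-element block having unequal parts.

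The hard part is precisely this grouping lemma, and for some partitions surviving the reductions it genuinely has no solution: when the tail comprises one or two ``medium'' parts of size exceeding $a/3$ together with a few small parts — already $\LS(a^3v^2w)$ with $a/3<v<a$ and $w<v$ — there is no admissible strictly coarser $Q$, since such a $v$ can be neither grouped with other parts (total size too large) nor paired off (two-part blocks are forbidden) while actually reducing the part count. For this residual family I would fall back on a construction that builds in the small holes from a group-divisible design or frame together with incomplete latin squares — a route that avoids the degenerate two- and three-part realizations — supplemented by ad hoc constructions for the smallest values of $k$ in the spirit of \cite{heinrich2006latin,kemp2024latin}. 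Checking that the greedy grouping and this auxiliary mechanism jointly handle every partition left after the reductions is the heart of the proof.
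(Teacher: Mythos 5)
Your reductions to $k\ge 6$ with at least three distinct part sizes are sound, and you have correctly located the obstruction: the filling construction cannot absorb tail parts that are too large to be grouped under a part of size at most $h_1$ yet too few to form a realizable block on their own. But the proposal stops exactly where the work begins. The residual family is not a thin exceptional set --- it contains, for instance, every partition $(a^3h_4\dots h_k)$ whose tail parts are pairwise distinct with $h_4>a/3$, and more generally any partition whose tail admits no decomposition into realizable blocks of size at most $a$ --- and for it you offer only an unspecified appeal to group-divisible designs, frames and ``ad hoc constructions.'' Neither the grouping lemma (that a block decomposition with $r<k-3$, no two-part blocks and no unequal three-part blocks exists whenever one is needed) nor the fallback construction is proved or even precisely stated, so the argument has a genuine gap at its core.

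For comparison, the paper does not use subsquare-filling at all. It splits on the size of the tail sum $\sum_{i>m}h_i$ relative to $(m-1)(h_m+h_{m+1})$: when the tail is long, a direct construction (prolongation of an odd-order back-circulant latin square, amalgamation of symbols, and intercalate trades, packaged as \Cref{jl_combined_construction}) produces the realization outright; when the tail is short, a frequency-array augmentation (\Cref{lemma: frequency array to add on,lemma: sum freq arrays}) grows an $\LS(h_{\ell+1}^{\ell+1}h_{\ell+2}\dots h_k)$ into an $\LS(h_\ell^{\ell}h_{\ell+1}\dots h_k)$, and a descending induction on $\ell$ ties the two regimes together. The two conditions are exactly complementary, so no residual family is left over. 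To salvage your route you would need, at minimum, a concrete construction for partitions with several distinct medium-sized tail parts; that is precisely the content of the paper's Section 2 and is not something the known ILS/GDD machinery supplies for free.
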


In the remainder of this section, we give notation and definitions that will be used throughout the following sections.

Let $[m]=\{i \in \Z \mid 1\le i \le m\}$, $[m] + n = n+[m] = \{i \in \Z \mid n+1\le i \le n+m\}$, and for integer partition $P=(h_1h_2\dots h_k)$, define $P[i]$ to be the set $[h_i]+\sum_{j=1}^{j-1} h_j$, so that $\{P[i]\mid i \in [k]\}$ partitions $[\sum_{i=1}^k h_i]$.
Let $k\{x\}$ denote the multiset consisting of $k$ copies of element $x$, and so $\sum_{i=1}^n k_i\{x_i\} $ is the multiset consisting of $k_i$ copies of $x_i$ for $i\in [n]$.

\begin{definition}
    Given partitions $P,Q,R$ of $n$, where $P = (p_1\dots p_u)$, $Q = (q_1\dots q_v)$, $R = (r_1\dots r_t)$, let $O$ be a $u\times v$ array of multisets, with elements from $[t]$. For $i\in [u]$ and $j\in[v]$, let $O(i,j)$ be the multiset of symbols in cell $(i,j)$ and let $|O(i,j)|$ be the number of symbols in the cell, including repetition.

    Then $O$ is an \emph{outline rectangle} associated to $(P,Q,R)$ if
    \begin{enumerate}
        \item $|O(i,j)| = p_iq_j$, for all $(i,j)\in[u]\times[v]$;
        \item symbol $l\in[t]$ occurs $p_ir_l$ times in the row $(i,[v])$;
        \item symbol $l\in[t]$ occurs $q_jr_l$ times in the column $([u],j)$.
    \end{enumerate}
\end{definition}

The array of multisets in \Cref{fig: outline rectangle example} is an outline rectangle associated to $((1^32^21^2),(3^12^21^2),(3^11^6))$.

\begin{figure}[h]
    \centering
$$\arraycolsep=4pt\begin{array}{|ccc|cc|cc|c|c|}\hline
1 & 1 & 1 & 4 & 6 & 2 & 3 & 7 & 5 \\ \hline
1 & 1 & 1 & 5 & 7 & 2 & 3 & 4 & 6 \\ \hline
1 & 1 & 1 & 4 & 5 & 6 & 7 & 3 & 2 \\ \hline
4 & 4 & 5 & 2 & 2 & 1 & 1 & 1 & 1 \\
5 & 6 & 7 & 3 & 3 & 6 & 7 & 1 & 1 \\ \hline
2 & 3 & 6 & 1 & 1 & 4 & 4 & 1 & 1 \\
6 & 7 & 7 & 1 & 1 & 5 & 5 & 2 & 3 \\ \hline
2 & 3 & 5 & 1 & 7 & 1 & 1 & 6 & 4 \\ \hline
2 & 3 & 4 & 1 & 6 & 1 & 1 & 5 & 7 \\ \hline
\end{array}$$
    \caption{An outline rectangle associated to $((1^32^21^2),(3^12^21^2),(3^11^6))$.}
    \label{fig: outline rectangle example}
\end{figure}

Outline rectangles were introduced by Hilton in \cite{hilton1980reconstruction} and can be obtained from latin squares.

\begin{definition}
    Given partitions $P,Q,R$ of $n$, where $P = (p_1\dots p_u)$, $Q = (q_1\dots q_v)$ and $R = (r_1\dots r_w)$, and a latin square $L$ of order $n$, the \emph{reduction modulo $(P,Q,R)$} of $L$, denoted $O$, is the $u\times v$ array of multisets obtained by amalgamating rows $(p_1 + \dots + p_{i-1}) + [p_i]$ for all $i\in[u]$, columns $(q_1 + \dots + q_{j-1}) + [q_j]$ for all $j\in[v]$, and symbols $(r_1 + \dots + r_{k-1}) + [r_k]$ for all $k\in[w]$.

    When amalgamating symbols, for $k\in[w]$ we map all symbols in $(r_1 + \dots + r_{k-1}) + [r_k]$ to symbol $k$.
\end{definition}

The outline rectangle in \Cref{fig: outline rectangle example} is a reduction modulo $((1^32^21^2),(3^12^21^2),(3^11^6))$ of the latin square given in \Cref{fig: square example}.

It is clear that a reduction modulo $(P,Q,R)$ of a latin square is an outline rectangle associated to $(P,Q,R)$. If the reverse is true, meaning that an outline rectangle $O$ is a reduction modulo $(P,Q,R)$ of a latin square $L$, then we say that $O$ \emph{lifts} to $L$. As shown by Hilton \cite{hilton1980reconstruction}, the reverse is always true.

\begin{theorem}[\cite{hilton1980reconstruction}]
\label{outline rectangle to square}
    Let $P,Q,R$ be partitions of $n$. For every outline rectangle $O$ associated to $(P,Q,R)$, there is a latin square $L$ of order $n$ such that $O$ lifts to $L$.
\end{theorem}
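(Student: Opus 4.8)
The plan is to prove the statement by \emph{iterated refinement}. Starting from $O$, I repeatedly replace one part of one of the three partitions by two smaller positive parts whose sizes sum to the original, each time producing a new outline rectangle that \emph{amalgamates back} to the previous one. After finitely many such steps all three partitions have become $(1^n)$, and an outline rectangle associated to $((1^n),(1^n),(1^n))$ is, directly from the three defining conditions (each cell has size $1\cdot1=1$, and each symbol occurs $1\cdot1=1$ time in each row and column), precisely a latin square $L$ of order $n$. Composing the chain of amalgamations then exhibits $O$ as the reduction of $L$ modulo $(P,Q,R)$.

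The heart of the argument is the single \emph{splitting step}. By the evident three-fold symmetry of the definition — an outline rectangle is the same thing as a multiset of triples from $[u]\times[v]\times[t]$ in which the pair-degrees are $p_iq_j$, $p_ir_l$, $q_jr_l$, and the three coordinates with their weights $p,q,r$ play interchangeable roles — it suffices to treat the symbol partition. So suppose $O$ is associated to $(P,Q,R)$ with $P=(p_1\dots p_u)$, $Q=(q_1\dots q_v)$, $R=(r_1\dots r_t)$, fix $l$ with $r_l=a+b$ and $a,b>0$, and let $R'$ be $R$ with $r_l$ replaced by the two parts $a,b$ (call the new symbols $l',l''$, keep all others unchanged). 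For each cell $(i,j)$ let $c_{ij}$ be the multiplicity of symbol $l$ in $O(i,j)$; the defining conditions give $\sum_j c_{ij}=p_ir_l$ and $\sum_i c_{ij}=q_jr_l$. I seek integers $x_{ij}$ with $0\le x_{ij}\le c_{ij}$, $\sum_j x_{ij}=p_ia$, and $\sum_i x_{ij}=q_ja$. Given such $x_{ij}$, recolour $x_{ij}$ of the copies of $l$ in cell $(i,j)$ to $l'$ and the remaining $c_{ij}-x_{ij}$ to $l''$: every cell cardinality is unchanged, symbol $l'$ now occurs $p_ia=p_ir_{l'}$ times in row $i$ and $q_ja$ times in column $j$, symbol $l''$ occurs $p_ib$ and $q_jb$ times respectively, and every other symbol is untouched, so the resulting $O'$ is an outline rectangle associated to $(P,Q,R')$ that amalgamates ($l',l''\mapsto l$) back to $O$.

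It remains to produce the integers $x_{ij}$. These are exactly the feasible integer points of a balanced transportation problem with supplies $p_ia$, demands $q_ja$ (both summing to $an$) and edge capacities $c_{ij}$. The rational point $x_{ij}=\tfrac{a}{a+b}\,c_{ij}$ satisfies every constraint — e.g.\ $\sum_j x_{ij}=\tfrac{a}{a+b}\sum_j c_{ij}=\tfrac{a}{a+b}\,p_i(a+b)=p_ia$, and similarly for the column sums and the box constraints — so the polytope $\{x:\sum_j x_{ij}=p_ia,\ \sum_i x_{ij}=q_ja,\ 0\le x_{ij}\le c_{ij}\}$ is nonempty and bounded, hence has a vertex. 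Its constraint matrix is (folded from) the incidence matrix of a bipartite graph and is therefore totally unimodular, so with the integral right-hand sides $p_ia,q_ja,c_{ij}$ every vertex is integral, and any such vertex supplies the desired $x_{ij}$.

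I expect the main obstacle to be not this flow argument, which is routine once set up, but the bookkeeping around it: checking that the recoloured array satisfies \emph{all three} outline-rectangle conditions rather than only the two used to construct it, making precise that the row-, column- and symbol-splitting steps genuinely correspond under the symmetry of the definition, and verifying that amalgamations performed in different coordinates commute, so that the composite amalgamation of $L$ really is the reduction modulo $(P,Q,R)$. Organising this via an induction on $\sum_i(p_i-1)+\sum_j(q_j-1)+\sum_l(r_l-1)$ (with base case $0$ giving the latin square directly) is where the care is needed.
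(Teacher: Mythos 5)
Your argument is correct, but note that the paper itself offers no proof of this statement: it is Theorem~\ref{outline rectangle to square}, imported verbatim from Hilton's 1980 paper, so there is nothing internal to compare against. What you have written is a legitimate self-contained reconstruction of the standard ``detachment'' argument behind Hilton's theorem. The key observations all check out: an outline rectangle is equivalent to a multiplicity function $m_{ijl}$ with the three marginal conditions $\sum_l m_{ijl}=p_iq_j$, $\sum_j m_{ijl}=p_ir_l$, $\sum_i m_{ijl}=q_jr_l$, and this formulation is genuinely symmetric in the three coordinates, so a single splitting lemma suffices. For the splitting step, the capacitated transportation polytope you describe is nonempty (via the fractional point $\tfrac{a}{a+b}c_{ij}$) and bounded, its constraint matrix is totally unimodular with integral right-hand sides, so an integral vertex exists; the recoloured array visibly satisfies all three conditions, and the new parts being positive means the induction on $\sum_i(p_i-1)+\sum_j(q_j-1)+\sum_l(r_l-1)$ terminates at a latin square. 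The one piece of bookkeeping you flag but should not wave away entirely is that the definition of reduction in this paper amalgamates \emph{contiguous} blocks of rows, columns and symbols, so when you split a part you must place the two new parts adjacently (and track the relabelling) so that the composite of your elementary amalgamations really is the reduction modulo $(P,Q,R)$ as defined; since elementary amalgamations are just sums of multiplicities over index classes, they commute across coordinates and this works out. Hilton's original proof proceeds by equitable edge-colourings of bipartite graphs rather than transportation polytopes, but the combinatorial content is the same; your version has the advantage of reducing everything to one totally-unimodular feasibility check applied three ways.
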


When we want the latin square $L$ to have disjoint subsquares, the outline rectangle $O$ must have multisets that correspond to the subsquares.

\begin{lemma}[\cite{kuhl2019existence}]
\label{outline to realization}
    For a partition $P = (h_1\dots h_k)$, an outline rectangle associated to $(P,P,P)$ with cell $(i,i)$ filled with $h_i^2$ copies of symbol $i$ for all $i\in[k]$ lifts to a realization of $P$.
\end{lemma}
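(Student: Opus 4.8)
The plan is to apply Hilton's lifting theorem (\Cref{outline rectangle to square}) to produce a concrete latin square and then read the required subsquares directly off the diagonal cells $O(i,i)$.

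First I would invoke \Cref{outline rectangle to square}: since $O$ is an outline rectangle associated to $(P,P,P)$, there is a latin square $L$ of order $n=\sum_{i=1}^k h_i$ whose reduction modulo $(P,P,P)$ is $O$. Recall that forming this reduction amalgamates the rows indexed by $P[i]$, the columns indexed by $P[i]$, and the symbols in $P[i]$ (mapping every symbol of $P[i]$ to the single value $i$), for each $i\in[k]$.

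Next I would fix $i\in[k]$ and examine the $h_i\times h_i$ subarray $A_i$ of $L$ lying in the rows indexed by $P[i]$ and the columns indexed by $P[i]$. By the definition of the reduction, the multiset of amalgamated symbols occurring in $A_i$ is exactly $O(i,i)$, which by hypothesis equals $h_i^2\{i\}$; and a symbol of $L$ amalgamates to $i$ precisely when it lies in $P[i]$. Hence every entry of $A_i$ lies in $P[i]$. Now each row of $A_i$ consists of $h_i$ cells lying in a common row of $L$, so its entries are distinct, and being $h_i$ distinct elements of the $h_i$-element set $P[i]$ they are exactly $P[i]$; the same holds for columns. Therefore $A_i$ is a latin square of order $h_i$ on the symbol set $P[i]$, i.e.\ a subsquare of $L$.

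Finally I would verify disjointness: for $i\neq j$ the subsquare $A_i$ occupies rows $P[i]$, columns $P[i]$ and symbols $P[i]$, while $A_j$ uses rows, columns and symbols from $P[j]$, and $P[i]\cap P[j]=\emptyset$ because $\{P[\ell]:\ell\in[k]\}$ partitions $[n]$. Thus $A_1,\dots,A_k$ are pairwise disjoint subsquares of orders $h_1,\dots,h_k$, so $L$ is a realization of $P$ and $O$ lifts to it. I do not expect a genuine obstacle here beyond careful bookkeeping with the sets $P[i]$; the one step worth stating explicitly is the pigeonhole argument showing that an $m\times m$ subarray of a latin square whose entries all come from a fixed $m$-element symbol set is automatically a latin subsquare.
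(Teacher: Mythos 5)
Your proof is correct. The paper itself states this lemma as a cited result from the literature and gives no proof of its own, so there is nothing internal to compare against; your argument --- lift $O$ to a latin square $L$ via \Cref{outline rectangle to square}, observe that the hypothesis $O(i,i)=h_i^2\{i\}$ forces every entry of the $h_i\times h_i$ subarray on rows and columns $P[i]$ to lie in $P[i]$, and then use the pigeonhole/latin property of rows and columns of $L$ to conclude that this subarray is a latin square on $P[i]$ --- is exactly the standard argument one would expect, and the disjointness bookkeeping is handled correctly.
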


When $P=Q=R$, an outline rectangle associated to $(P,P,P)$ is called an \emph{outline square} associated to $P$.

In Section 2 we use outline rectangles to construct realizations of the form given in \Cref{conj: largest 3} that satisfy an additional condition. Then in Section 3, frequency arrays are used to construct all remaining cases.

\section{Circulant construction}

The primary construction in this section is based on the prolongation of an odd order back circulant latin square, followed by amalgamation of selected symbols, then intercalate trades to provide the required subsquares. We do not formally define these terms as the following lemma provides an explicit construction.

\begin{lemma}
\label{circulant_construction}
    Let $P=(h_1h_2h_3\dots h_k)$, where $n=\sum_{i=1}^k h_i$, $t=n-h_1$ is odd, and $h_i\ge h_{i+1}$ for $2\le i \le k-1$. Further suppose that $h_2 \le (t+1)/4$ and $2h_2 \le h_1 \le t+1-2h_2$.
    Define a multiset $V = \sum_{i=2}^k h_i\{h_1+i-1\}$, so $|V|=t$.
    Then there exists an outline rectangle $O$ associated to $((1^n), (1^n), (1^{h_1}h_2h_3\dots h_k))$ with the following properties:
    \begin{enumerate}
        \item For any $x,y \in P[1]$, then $O(x,y)\in P[1]$. For any
        $i \in [k]\setminus \{1\}$ and $x,y \in P[i]$, then $O(x,y)=\{h_1+i-1\}$. Thus $O$ lifts to a realization of $P$ in normal form.
        \item We have triple sets $T_i=\{(x_{i,j},y_{i,j},z_{i,j}) \mid j\in [t]\}$, for all $i\in [h_1-2h_2]$, such that: $\{x_{i,j} \mid j\in [t]\}=\{y_{i,j} \mid j\in [t]\} =  [n]\setminus [h_1]$; $\{z_{i,j} \mid j\in [t]\}=V$; the pairs $(x_{i,j},y_{i,j})$ are distinct for all $i\in [h_1-2h_2]$ and $j \in [t]$; and for any $i\in [h_1-2h_2]$ and $j \in [t]$, $O(x_{i,j},y_{i,j})=\{i\}$ and $O(x_{i,j},i)=O(i,y_{i,j})=\{z_{i,j}\}$.
    \end{enumerate}
\end{lemma}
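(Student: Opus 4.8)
The plan is to build $O$ as a modified prolongation of the back circulant latin square $B$ of order $t$ on $\Z_t$, where $B(u,v)=u+v$. Since $t$ is odd, each broken diagonal $D_r=\{(u,u+r):u\in\Z_t\}$ is a transversal of $B$ (its entries $2u+r$ run over all of $\Z_t$), and the $t$ transversals $D_0,\dots,D_{t-1}$ are pairwise disjoint. First I would pick $h_1$ offsets $r_1,\dots,r_{h_1}$ and prolong $B$ along $D_{r_1},\dots,D_{r_{h_1}}$ simultaneously: adjoin $h_1$ new rows, columns and symbols, overwrite the cell of $D_{r_m}$ by the $m$-th new symbol, and send each displaced entry $2u+r_m$ of row $u$ to the new cell $(u,t+m)$ and (symmetrically) each displaced entry $2v-r_m$ of column $v$ to $(t+m,v)$. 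One checks this is a latin square of order $n=t+h_1$ whose $h_1\times h_1$ block on the new rows and columns is a latin square on the new symbols; relabelling so that the new rows, columns and symbols become $1,\dots,h_1$ and the original symbols become $h_1+1,\dots,n$ moves that block to the top-left corner. Amalgamating the original symbols into classes of sizes $h_2,\dots,h_k$, placed so that class $h_1+i-1$ is the symbol interval $P[i]$, then yields an outline rectangle $O'$ associated to $((1^n),(1^n),(1^{h_1}h_2\cdots h_k))$ whose top-left $h_1\times h_1$ block already uses only the symbols $P[1]$, and on whose two border strips (one coordinate in $[h_1]$, the other in the $\Z_t$-part) the entry at row $u$, column $m$ is $\mathrm{bl}(2u+r_m)$ and the entry at row $m$, column $v$ is $\mathrm{bl}(2v-r_m)$, writing $\mathrm{bl}$ for the map sending an original symbol to its amalgamation class.

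The choice of offsets is where the hypotheses are used. The transversal $D_r$ meets a diagonal block $P[i]$ (an interval of $\Z_t$ of length $h_i$) exactly when $r\bmod t$ lies in a fixed set of $2h_i-1$ values, so taking every $r_m\in[h_2,t-h_2]$ makes each $D_{r_m}$ miss every diagonal block. I would further put $2h_2$ of the offsets near the ends of this interval (the ``helper'' transversals) and the remaining $h_1-2h_2$ in the central range $[2h_2,t-2h_2]$ (the ``protected'' transversals). The conditions $2h_2\le h_1\le t+1-2h_2$ and $h_2\le(t+1)/4$ are precisely what make a choice of $h_1$ distinct offsets of this kind possible: $[2h_2,t-2h_2]$ contains $t-4h_2+1\ge h_1-2h_2$ integers, and a protected offset $r$ keeps both $u+r$ and $v-r$ a positive distance past every diagonal block for $u,v$ in that block, so each protected transversal threads the gap around the block diagonal.

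The remaining task, and the main obstacle, is that the diagonal blocks of $O'$ are not constant: the back circulant forces block $P[i]\times P[i]$ to carry all of the $2h_i-1$ symbols $\mathrm{bl}(u+v)$ with $u,v\in P[i]$, so no amalgamation makes it monochromatic and genuine latin trades are unavoidable. I would correct the blocks one at a time, turning block $P[i]\times P[i]$ into the constant symbol $h_1+i-1$ by a sequence of intercalate trades confined to the rows $P[i]$, the columns $P[i]$ and a bounded band around them, using the helper transversals inside that band as the slack that lets the trade chains close up locally; at most $2h_i\le 2h_2$ helpers are needed for one block, and since the blocks are disjoint and are handled separately the same $2h_2$ helpers serve throughout. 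The real work is to write these trades down explicitly and to check simultaneously that each (being a swap of two distinct amalgamated symbols) keeps $O'$ an outline rectangle, and that none of them meets the top-left $h_1\times h_1$ block, the two border strips, the protected transversals, or the rows and columns indexed $1,\dots,h_1-2h_2$ of the $\Z_t$-part -- the non-interference that the careful offset ranges and the inequality $2h_2\le h_1$ are there to provide. Write $O$ for the result.

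It then remains to verify the two properties. For (1): the diagonal blocks of $O$ are now constant, block $P[i]\times P[i]$ carrying the single symbol $h_1+i-1$ whose amalgamation class is the $h_i$-element set $P[i]$; lifting $O$ to a latin square $L$ by \Cref{outline rectangle to square}, each such block of $L$ is an $h_i\times h_i$ subarray using only the $h_i$ symbols of $P[i]$, hence a subsquare, and these $k$ subsquares sit on the row intervals $P[1],\dots,P[k]$, the matching column intervals and the matching symbol intervals, so they are pairwise disjoint and $L$ is in normal form. For (2): for each protected index $i\le h_1-2h_2$, the cells of the $\Z_t$-part of $O$ carrying symbol $i$ are exactly those of the untouched transversal $D_{r_i}$ (the trades never involve the protected small symbols), which gives the pairs $(x_{i,j},y_{i,j})$ as a permutation of $[n]\setminus[h_1]$; the preserved border-strip formulas give $O(x_{i,j},i)=O(i,y_{i,j})$, the common value $z_{i,j}$ being $\mathrm{bl}$ of the back-circulant entry on $D_{r_i}$ in row $x_{i,j}$, and as $x_{i,j}$ runs over the $\Z_t$-part these entries run over $\Z_t$, so $\{z_{i,j}:j\in[t]\}=\sum_{l=2}^{k}h_l\{h_1+l-1\}=V$; distinctness of the pairs across all $i$ and $j$ is immediate since distinct offsets give disjoint transversals.
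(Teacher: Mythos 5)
Your high-level architecture matches the paper's: prolong a circulant of order $t$ along $h_1$ chosen transversals, amalgamate the old symbols into the classes $P[2],\dots,P[k]$, repair the diagonal blocks by intercalate trades, and reserve $h_1-2h_2$ untouched transversals to supply the triple sets $T_i$ (your verification of property (2) and of the lifting step is essentially the paper's). But there is a genuine gap at the centre: you write that ``the real work is to write these trades down explicitly and to check'' that they preserve the outline-rectangle conditions and avoid the protected structure, and then you do not do that work. That is not a routine verification to be deferred --- it is the entire technical content of the lemma. The paper's proof spends almost all of its length there: it defines the index sets $R_i=\{(a,b) : a,b\in[h_i],\ a<b,\ b-a \text{ odd}\}$, associates to each $(a,b)$ four cells $(x_1,y_1),(x_2,y_2),(x_1,y_2),(x_2,y_1)$, and proves via eight explicit identities (the values $(x\oplus_t y)\otimes_t\frac{t+1}{2}$ and the differences $y\ominus_t x$) that all of these cells, over all $i$ and all $(a,b)$, are pairwise distinct and disjoint from the border strips and the protected transversals, so that the four-cell swaps are well-defined, compatible, and legal. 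Without an analogue of that argument your proof does not establish the statement.

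Moreover, your specific setup makes the deferred step strictly harder than the paper's, not easier. You use the plain back circulant $B(u,v)=u+v$ with interval amalgamation classes and choose all $h_1$ offsets so that every transversal misses every diagonal block; then, as you yourself note, block $P[i]\times P[i]$ carries $2h_i-1$ distinct symbol classes and essentially every one of its $h_i^2$ cells must be corrected, with the correct class having to be imported from outside the block. The paper instead places $(u\oplus_t v)\otimes_t\frac{t+1}{2}$ in cell $(u,v)$, so that for $u,v\in P[i]$ with $v-u$ even the entry already lies in $P[i]$ and needs no trade at all, and it deliberately routes the $2h_2$ small odd differences $D_2$ \emph{through} the diagonal blocks, so that exactly the odd-difference cells carry the amalgamated prolongation symbol and the repair reduces to clean four-cell swaps between the diagonal block and two off-diagonal cells that provably carry the needed symbol. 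Your ``helper transversals as slack letting the trade chains close up locally'' has no analogue of this mechanism, and there is no argument that such chains exist or terminate; as stated, the proposal is a plan for a proof rather than a proof.
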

\begin{proof}
    Define $\oplus_t$ such that $x_1 \oplus_t x_2 \equiv x_1+x_2 \pmod{r}$ and $x_1 \oplus_t x_2 \in [r]$, and define $\otimes_t$ and $\ominus_t$ similarly.
    Observe that $([t],\oplus_t,\otimes_t)$ is a commutative ring in which 2 has multiplicative inverse $\frac{t+1}{2}$.
    We can form a latin square of order $t$ by placing $(i \oplus_t j) \otimes_t \frac{t+1}{2}$ in cell $(i,j)$ for all $i,j \in [t]$. For any $d\in [t]$, the set of cells $(i,j)$ with $i \ominus_t j = d$ forms a {\em transversal}, in which each symbol from $[t]$ occurs exactly once. Let 
    \begin{eqnarray*}
        D_1 &=& \{t-(2h_2-2),t-(2h_2-4),\dots,t-4,t-2\} \cup \{t\}\cup \{2,4,\dots ,2h_2-4,2h_2-2\}, \\
        D_2 &=& \{t-(2h_2-1),t-(2h_2-3),\dots ,t-3,t-1\} \cup \{1,3,\dots,2h_2-3,2h_2-1\}.
    \end{eqnarray*}
    These are subsets of $[t]$ of size $2h_2-1$ and $2h_2$ respectively, which are disjoint since $h_2 \le (t+1)/4$. Let $D=\{d_i \mid i\in [h_1]\}$ be a subset of [t] of size $h_1$ with $\{d_i \mid i \in [h_1]\setminus [h_1-2h_2]\} = D_2$ and $\{d_i \mid i \in [h_1-2h_2]\} \subseteq [t] \setminus (D_1\cup D_2)$.

    We construct a latin square $L$ of order $n$ as follows. Consider $i,j \in [t]$. If $j \ominus_t i = d_k \in D$ for some $k \in [h_1]$, then we place $k$ in cell $(i+h_1,j+h_1)$, and $((i \oplus_t j) \otimes_t \frac{t+1}{2})+h_1$ in cells $(k,j+h_1)$ and $(i+h_1,k)$. Otherwise, we place $((i \oplus_t j) \otimes_t \frac{t+1}{2})+h_1$ in cell $(i+h_1,j+h_1)$. We complete $L$ by filling the cells with row and column in $[h_1]$ with a subsquare on symbols $[h_1]$. 

    We now form an outline rectangle $O_1$ associated to $(1^n,1^n,1^{h_1-2h_2}(2h_2)^1h_2h_3\dots h_k)$ by amalgamating the symbols in $[h_1]\setminus[h_1-2h_2]$ to 0 and $P[h_i]$ to $h_1+i-1$ for each $i \in [k] \setminus \{1\}$. Note that the symbols are now non-contiguous integers. 
    Observe that for $a,b \in [t]$, if $b \ominus_t a \in D_2$, then $O_1(h_1+a,h_1+b)=\{0\}$.

For each $i \in [k]\setminus\{1\}$, we define 
$$R_i = \{ (a,b) \mid a,b \in [h_i], \: a < b, \: b-a \text{ odd} \}, \;\;\; S^i=\sum_{j=2}^{i-1} h_j. $$ 
Consider any $i\in [k]\setminus\{1\}$ and $(a,b)\in R_i$, and let 
\begin{eqnarray*}
    x_1 &=& S^i+a, \\
    y_1 &=& S^i+b, \\
    x_2 &=& (S^i+1) \ominus_t a, \\
    y_2 &=& (S^i+2h_i+1) \ominus_t b.
\end{eqnarray*}
 We have
\begin{eqnarray}
    (x_1 \oplus_t y_1) \otimes_t \left(\frac{t+1}{2}\right) &=& \left(S^i +\frac{a+b-1}{2}\right) \oplus_t \left(\frac{t+1}{2}\right), \label{x1py1} \\
    (x_2 \oplus_t y_2) \otimes_t \left(\frac{t+1}{2}\right) &=& \left(S^i +h_i-\frac{a+b-1}{2}\right)\oplus_t\left(\frac{t+1}{2}\right), \label{x2py2}\\
    (x_2 \oplus_t y_1) \otimes_t \left(\frac{t+1}{2}\right) &=& S^i +1+\frac{b-a-1}{2}, \label{x2py1} \\
     (x_1 \oplus_t y_2) \otimes_t \left(\frac{t+1}{2}\right) &=& S^i +h_i-\frac{b-a-1}{2}, \label{x1py2}
\end{eqnarray}
and
\begin{eqnarray}
y_1 \ominus_t x_1 &=& b-a \in D_2 \cup [1,h_i-1], \label{y1mx1}\\
y_2 \ominus_t x_2 &=& 2h_i - (b-a) \in D_2 \cup[h_i+1,2h_i-1], \label{y2mx2}\\
y_1 \ominus_t x_2 &=& a+b-1 \in D_1 \cup [2,2h_i-2], \label{y1mx2}\\
y_2 \ominus_t x_1 &=& 2h_i+1-(a+b) \in D_1 \cup [2,2h_i-2].\label{y2mx1}
\end{eqnarray}
Consider the pairs $(x_1,y_1),(x_2,y_2),(x_1,y_2),(x_2,y_1)$. We first show that these are distinct from each other and from the corresponding pairs for all other $i,a,b$. Eqs (\ref{y1mx1}) and (\ref{y2mx2}) distinguish $(x_1,y_1)$ and $(x_2,y_2)$ pairs for the same $i$, while (\ref{x1py1}) and (\ref{x2py2}) distinguish them from the $(x_1,y_1)$ and $(x_2,y_2)$ pairs for other $i$ values. Similarly, Eqs (\ref{x2py1}) and (\ref{x1py2}) distinguish $(x_1,y_2)$ and $(x_2,y_1)$ from each other and from the corresponding pairs for other $i$. Eqs
(\ref{y1mx1}-\ref{y2mx1}) distinguish all $(x_1,y_1),(x_2,y_2)$ pairs from all $(x_2,y_1),(x_1,y_2)$ pairs, and also all such pairs from the corresponding pairs with reversed order (that is, exchanging the role of rows and columns).

By Eqs (\ref{y1mx1}-\ref{y2mx1}), we have $O_1(y_1+h_1,x_1+h_1)=O_1(y_2+h_1,x_2+h_1)=\{0\}$, while cells $(y_2+h_1,x_1+h_1)$ and $(y_1+h_1,x_2+h_1)$ of $L$ contain $(x_1 \oplus_t y_2) \otimes_t \left(\frac{t+1}{2}\right)+h_1$ and $(x_2 \oplus_t y_1) \otimes_t \left(\frac{t+1}{2}\right)+h_1$ respectively. These both lie in the set $P[i]$, so $O_1(y_1+h_1,x_2+h_1)=O_1(y_2+h_1,x_1+h_1)=\{h_1+i-1\}$. 
We form the outline rectangle $O_2$ from $O_1$ by swapping the values 0 and $h_1+i-1$ in these four cells, for each $i \in [k]\setminus\{1\}$ and $(a,b)\in R_i$, and similarly for the equivalent cells with row and column reversed. Consider any $i \in [k]\setminus\{1\}$ and  $a,b \in P[i]$. Then $a \ominus_t b$ is in $D_1$ or $D_2$. If $a \ominus_t b \in D_2$, the trades we have described give $O_2(a,b) = \{h_1+i-1\}$, as required (while $O_1(a,b) = \{0\}$). If $a \ominus_t b \in D_1$, then we can assume without loss of generality that $b=a+2d$, where $0\le d\le (h_i-1)$, so $(a \oplus_t b) \otimes_t \left(\frac{t+1}{2}\right)+h_1 = (a \oplus_t d) + h_1 \in P[i]$. Again, $O_2(a,b)=\{h_1+i-1\}$.

We construct the final outline rectangle $O$ associated to $(1^n,1^n,1^{h_1}h_2h_3\dots h_k)$ by lifting $O_2$ to a latin square, with 0 mapped to $[h_1]\setminus [h_1-2h_2]$, then for each $i\in [k] \setminus \{1\}$ re-amalgamating the $h_i$ symbols corresponding to $h_1+i-1$ in $O_2$ back to $h_1+i-1$.

It remains only to establish the existence of the triple sets $T_i$. These correspond to the differences in $D \setminus D_2$.
For each $i\in [h_1-2h_2]$, $T_i = \{(a+h_1,b+h_1,f(a,b)) \mid a,b \in [t],\; b\ominus_t a = d_i\}$, where $f(a,b)$ is the symbol $(a \oplus_t b) \otimes_t \left(\frac{t+1}{2}\right)+h_1$ after amalgamation of each set $P[j]$ to $h_1+j-1$, $j\in [k]\setminus\{1\}$. By construction, cell $(a+h_1,b+h_1)$ of $L$ contains $i$, while cells $(i,b+h_1)$ and $(a+h_1,i)$ contain $(a \oplus_t b) \otimes_t \left(\frac{t+1}{2}\right)+h_1$. This last value is amalgamated in $O_1$, but the symbols in $[h_1-2h_2]$ are not. Since $d_i \not\in D_1 \cup D_2$, none of the three cells is involved in any trade, so the structure is retained in $O_2$ and hence $O$.

\end{proof}

\begin{corollary}
\label{odd_r_from_circulant}
    Let $P=(h_1h_2h_3h_4\dots h_k)$, where $h_1=h_2=h_3$, $h_4 \ge \dots \ge h_k$, $r=\sum_{i=4}^k h_i$ is odd, $h_4 \le (r+1)/4$, and $2h_4 \le 3h_1 \le r+1-2h_4$.
    Then there exists an outline rectangle $O$ associated to $(P,P,P)$ such that $O(i,i)$ contains $h_i^2$ copies of $i$ for each $i\in [k]$, and additionally $O(1,2)$ and $O(2,1)$ each contain $h_1^2$ copies of 3, $O(1,3)$ and $O(3,1)$ each contain $h_1^2$ copies of 2, and $O(2,3)$ and $O(3,2)$ each contain $h_1^2$ copies of 1.  
\end{corollary}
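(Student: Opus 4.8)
The plan is to reduce to \Cref{circulant_construction} by merging the three equal parts $h_1,h_2,h_3$ into a single part of order $3h_1$, invoking that lemma, and then performing surgery on the resulting realization. Set $P''=(3h_1,h_4,h_5,\dots,h_k)$, a partition of $n$ with $k-2$ parts. First I would check that $P''$ satisfies the hypotheses of \Cref{circulant_construction}: here $t=n-3h_1=r$ is odd, the monotonicity condition on the parts after the first is exactly $h_4\ge\dots\ge h_k$, the bound $h_2''\le(t+1)/4$ is $h_4\le(r+1)/4$, and $2h_2''\le h_1''\le t+1-2h_2''$ is precisely $2h_4\le 3h_1\le r+1-2h_4$ --- all of these are hypotheses of the corollary. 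Hence part (1) of \Cref{circulant_construction} yields an outline rectangle that lifts to a realization $L''$ of $P''$ in normal form. In particular, the restriction of $L''$ to the rows and columns indexed by $[3h_1]$ is a subsquare $S$ on the symbol set $[3h_1]$, while the remaining subsquares of $L''$, of orders $h_4,\dots,h_k$, lie along the diagonal in rows, columns and symbols contained in $[n]\setminus[3h_1]$. Part (2) of the lemma, the triple sets, is not needed here.

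Next I would construct a realization $M$ of $(h_1^3)$ in normal form whose off-diagonal blocks are each constant on a single symbol group, in the pattern the corollary demands. Fix any latin square $N$ of order $h_1$, and write $N+c$ for $N$ with $c$ added to every entry. Let $M$ be the $3\times3$ block array whose $(i,i)$ block is $N+(i-1)h_1$ for $i\in[3]$, whose $(1,2)$ and $(2,1)$ blocks both equal $N+2h_1$, whose $(1,3)$ and $(3,1)$ blocks both equal $N+h_1$, and whose $(2,3)$ and $(3,2)$ blocks both equal $N$. A direct check shows that every block row and every block column of $M$ meets each of the three symbol groups $[h_1]$, $[h_1]+h_1$, $[h_1]+2h_1$ exactly once, so $M$ is a latin square of order $3h_1$; its diagonal blocks are pairwise disjoint subsquares, so $M$ realizes $(h_1^3)$ in normal form; and the reduction of $M$ modulo $((h_1^3),(h_1^3),(h_1^3))$ is exactly the $3\times3$ outline square with diagonal entries $h_1^2\{1\},h_1^2\{2\},h_1^2\{3\}$ and with $(1,2),(2,1)$ entries $h_1^2\{3\}$, $(1,3),(3,1)$ entries $h_1^2\{2\}$, and $(2,3),(3,2)$ entries $h_1^2\{1\}$.

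Now form $L'$ from $L''$ by replacing the subsquare $S$ with $M$; this is legitimate since $S$ and $M$ occupy the same rows, columns and symbol set, so $L'$ is again a latin square of order $n$. Moreover $L'$ is a realization of $P=(h_1,h_1,h_1,h_4,\dots,h_k)$ in normal form: the three order-$h_1$ subsquares come from $M$, the subsquares of orders $h_4,\dots,h_k$ are inherited unchanged from $L''$, and all are pairwise disjoint, lie along the main diagonal, and have increasing symbol sets. Finally let $O$ be the reduction of $L'$ modulo $(P,P,P)$; this is an outline rectangle associated to $(P,P,P)$. Since $L'$ is a realization of $P$ in normal form we get $O(i,i)=h_i^2\{i\}$ for all $i\in[k]$, and since the top-left $3h_1\times3h_1$ corner of $L'$ is exactly $M$, the top-left $3\times3$ corner of $O$ is the reduction of $M$ computed above, which gives the required values of $O(1,2),O(2,1),O(1,3),O(3,1),O(2,3),O(3,2)$.

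The only genuine idea is in the middle two steps: recognising that \Cref{circulant_construction} hands back an honest order-$3h_1$ subsquare that may be overwritten by any latin square on the same rows, columns and symbols, and that there is a realization of $(h_1^3)$ whose three interaction blocks are each monochromatic on one symbol group (the block pattern above is essentially forced by the symmetric order-$3$ latin square that reads $1,3,2$ / $3,2,1$ / $2,1,3$). The rest --- matching the corollary's numerical conditions to those of \Cref{circulant_construction}, and verifying that the surgery preserves the latin property and the normal form --- is routine bookkeeping, so I would not expect a serious obstacle.
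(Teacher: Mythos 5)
Your proposal is correct and follows essentially the same route as the paper: both apply \Cref{circulant_construction} with $t=r$ to the merged partition $((3h_1)h_4\dots h_k)$ and then exploit the fact that the top-left corner is a genuine order-$3h_1$ subsquare that can be overwritten to produce the required monochromatic off-diagonal blocks. The only (immaterial) difference is that you perform the replacement at the latin-square level before reducing, whereas the paper amalgamates first and then substitutes the required multisets into the $3\times 3$ corner of the outline rectangle.
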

\begin{proof}
    Let $n=\sum_{i=1}^k h_i = r+3h_1$. By \Cref{circulant_construction} (with $t=r$), we have an outline rectangle associated to $((1^n), (1^n), (1^{3h_1}h_4h_5\dots h_k))$ which lifts to a realization of $((3h_1)h_4\dots h_k)$ in normal form. We construct an outline rectangle $O$ by amalgamating rows, columns and symbols according to partition $P$, so that $O(i,i)$ contains $h_i^2$ copies of $i$ for each $i\in [k] \setminus [3]$. The cells $\{O(i,j) \mid i,j \in [3]\}$ form a subsquare on the amalgamated symbols $[3]$, so we can replace these cells with the required multisets.
\end{proof}

The even $r$ case is more complex.

\begin{lemma}
\label{even_r_from_circulant}
    Let $P=(h_1h_2h_3h_4\dots h_k)$, where $h_1=h_2=h_3\ge 2$, $h_4 \ge \dots \ge h_k$, $h_1(h_1-1)\geq 2(h_k-1)$, $r=\sum_{i=4}^k h_i$ is even, $h_4 \le (r-2)/4$, and 
    $2h_4+2 \le 3h_1+1 \le r+1-2h_4$.
    
    Then there exists an outline rectangle $O$ associated to $(P,P,P)$ such that $O(i,i)$ contains $h_i^2$ copies of $i$ for each $i\in [k]$, and additionally $O(1,2)$, $O(2,3)$ and $O(3,1)$ contain $h_1^2$ copies of 3, 1, and 2 respectively, while 
    $O(1,3)$, $O(2,1)$ and $O(3,2)$ contain at least $h_1(h_1-1)-2(h_k-1)$ copies of 2, 3, and 1 respectively.

\end{lemma}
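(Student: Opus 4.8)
The plan is to follow \Cref{odd_r_from_circulant} as closely as possible, working around the parity obstruction---$n-3h_1=r$ is even, so there is no circulant of that order---by applying \Cref{circulant_construction} to a partition with an \emph{odd} tail and afterwards welding a leftover unit subsquare onto the smallest part, keeping careful track of how much of the disturbance this produces lands in the cells $(1,3)$, $(2,1)$, $(3,2)$.

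First I would apply \Cref{circulant_construction} with $P'=((3h_1+1)\,h_4h_5\dots h_{k-1}(h_k-1))$ when $h_k\ge 2$, and with $P'=((3h_1+1)\,h_4\dots h_{k-1})$ when $h_k=1$; in each case the parts after the first sum to $r-1$, which is odd, and the numerical hypotheses of the present lemma are calibrated so that \Cref{circulant_construction} applies with $t=r-1$ (an exceptional case may require decreasing a different part by one). This produces an outline rectangle lifting to a realization $L$ of $P'$ in normal form, together with triple sets $T_1,\dots,T_{3h_1+1-2h_4}$ (note $3h_1+1-2h_4\ge 2$). The first part of $L$ is a subsquare of order $3h_1+1$, which we may choose freely: take it to be an $\LS(h_1^3\,1^1)$ (which exists by \Cref{thm: small k squares}) in which, after amalgamating its three $h_1$-subsquares to the symbols $1,2,3$, the blocks in positions $(1,2),(2,3),(3,1)$ are constant (equal to $3,1,2$) while in each of positions $(1,3),(2,1),(3,2)$ the unit symbol $s:=3h_1+1$ occurs exactly once per row and once per column, the remaining entries being $2,3,1$ respectively; such a latin square exists because these prescribed occurrences of $s$ form a permutation of $[3h_1+1]$ fixing $3h_1+1$.

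Next, amalgamate the rows, columns and symbols of $L$ according to $P$ (sending $[h_1]\mapsto 1$, $h_1+[h_1]\mapsto 2$, $2h_1+[h_1]\mapsto 3$, each remaining part to its own symbol, and $s\mapsto k$). Every diagonal cell becomes $h_i^2\{i\}$, the cells $(1,2),(2,3),(3,1)$ are constant, and each of $(1,3),(2,1),(3,2)$ contains $h_1(h_1-1)$ copies of $2,3,1$ together with $h_1$ copies of $s$. If $h_k=1$ the $s$-entries are already part of the $k$-th symbol and, since $h_k-1=0$, the lemma follows. For $h_k\ge 2$ the row, column and symbol $s$ must instead be merged with the $(h_k-1)$-subsquare to form a single $h_k$-subsquare; the only entries that must change are the $2(h_k-1)$ ``cross'' cells---row $s$ against the $h_k-1$ columns of that subsquare, and column $s$ against its $h_k-1$ rows. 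Each such correction is carried out by an intercalate trade built from the triples in $T_1,\dots,T_{3h_1+1-2h_4}$---which pair the $(h_k-1)$-subsquare's symbol with the small symbols $i\le 3h_1+1-2h_4$, amalgamated into blocks $1,2,3$---arranged so that all their side effects land in the amalgamated cells $(1,3),(2,1),(3,2)$. Since the pairs $(x_{i,j},y_{i,j})$ supplied by \Cref{circulant_construction} are pairwise distinct, the trades do not collide, and in total each of the three cells loses at most $2(h_k-1)$ of its copies of $2,3,1$, so the remaining $h_1(h_1-1)-2(h_k-1)$ copies are retained; the value $h_k-1=2$, an order admitting no transversal, is handled directly within the same budget.

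The main obstacle is this last step: one must realise the $2(h_k-1)$ cross-cell corrections by intercalate trades that leave all diagonal subsquares and the constant cells $(1,2),(2,3),(3,1)$ untouched, that confine every side effect to the cells $(1,3),(2,1),(3,2)$, and that are mutually compatible, while simultaneously checking that the $(h_k-1)$-subsquare can indeed be enlarged to order $h_k$ using the row, column and symbol $s$. This is exactly where the triple-set structure and pair-distinctness of \Cref{circulant_construction} are used, where the hypothesis $h_1(h_1-1)\ge 2(h_k-1)$ guarantees the corrections fit inside an $h_1\times h_1$ block (equivalently, that the asserted bound is non-negative), and where $h_1\ge 2$ is used so that those blocks have room to absorb the side effects.
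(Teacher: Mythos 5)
Your overall strategy matches the paper's: apply \Cref{circulant_construction} with $t=r-1$ to the partition $((3h_1+1)h_4\dots(h_k-1))$, install an order-$(3h_1+1)$ subsquare whose amalgamation puts $h_1^2$ copies of $3,1,2$ in cells $(1,2),(2,3),(3,1)$ and $h_1(h_1-1)$ copies of $2,3,1$ plus $h_1$ copies of the leftover unit symbol in cells $(1,3),(2,1),(3,2)$, and then use the triple sets to trade the leftover unit row, column and symbol into the smallest subsquare. However, there is a genuine gap: the trades themselves are never constructed. You explicitly name this as ``the main obstacle'' and then only assert that the triple-set structure makes it possible. This step is the technical core of the paper's proof: one must identify, for each of the $h_k-1$ cells of row $r+3$ meeting the $(h_k-1)$-subsquare, where the displaced symbol $d_i$ and the needed copy of $k$ can come from (the paper's sequences $a_i,b_i,c_i,d_i$, with a verification that $b_i,c_i,d_i\notin[3]$), select a minimal covering subset $U$ of the triple set $T_1'$ with $2(h_k-1)\le|U|\le 4(h_k-1)$, and write down an explicit closed trade (a thirteen-line table in the paper) that restores all row, column and cell counts. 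The claimed lower bound $h_1(h_1-1)-2(h_k-1)$ is exactly $h_1(h_1-1)+2(h_k-1)-|U|$ in the worst case, so it cannot be verified without knowing the trade's precise footprint.

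Two smaller inaccuracies point the same way. First, you claim the side effects can be confined to the three cells $(1,3),(2,1),(3,2)$; in fact any valid trade must also disturb cells such as $(r+3,3)$, $(c_i,3)$, $(1,a_i)$, $(1,b_i)$ and $(y_j,3)$, which after amalgamation lie in row $1$ or column $3$ but \emph{outside} the $[3]\times[3]$ block (this is harmless for the lemma, which does not constrain those cells, but it shows the mechanics have not been worked through). Second, the remark about $h_k-1=2$ ``admitting no transversal'' has no bearing on this construction. The parity workaround, the choice of the order-$(3h_1+1)$ subsquare, and the accounting of the $h_1(h_1-1)$ copies are all sound and agree with the paper; what is missing is precisely the part of the argument that makes the lemma nontrivial.
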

\begin{proof}
    Let $n=r+3h_1$.
By \Cref{circulant_construction} (with $t=r-1$), we have an outline rectangle $O_1$ associated to $((1^n), (1^n), (1^{3h_1+1}h_4h_5\dots (h_k-1)))$ which lifts to a realization of $((3h_1+1)h_4\dots (h_k-1))$ in normal form. The rows and columns $[3h_1+1]$ form a subsquare on symbols $[3h_1+1]$, and for $i\in [k-1] \setminus [3]$ the rows and columns $P[i]+1$ are filled with the symbol $3h_1+i-2$, while the rows and columns $(P[k]+1)\setminus \{n+1\}$ are filled with the symbol $3h_1+k-2$.
Further, since $(3h_1+1)-2h_4 \ge 2$, then there exist triple sets $T_1, T_2$ of size $r-1$ such that for $i\in [2]$ the following hold: (1) for any $x\in [n]\setminus [3h_1+1]$, there is exactly one triple $(x,y,z) \in T_i$ and one triple $(x,y,z) \in T_i$; (2) $\{z \mid (x,y,z) \in T_i$ is a multiset equal to $V= \left(\sum_{j=4}^{k} h_j \{3h_1+j-2\}\right)-\{3h_1+k-2\}$; and for any $(x,y,z) \in T_i$, cell $(x,y)$ contains $i$ and cells $(y,i)$ and $(i,x)$ contain $z$. 

    Let $P' = (h_1^31^r)$. We form an outline rectangle $O_2$ associated to $(P',P',(h_1^3 h_4h_5\dots h_{k-1}(h_k-1)1)$ by applying the following amalgamation and relabelling to $O_1$: rows, columns and symbols $[3h_1]$ are amalgamated into 3 groups of size $h_1$, labelled 1,2,3, such that the original values 1 and 2 are joined to groups 1 and 2 respectively. The row and column values $[n] \setminus [3h_1+1]$ are each reduced by $3h_1-2$, mapping to the range $4$ to $r+2$, while $3h_1+1$ becomes $r+3$. Similarly, the symbol values $[k+3h_1-2] \setminus [3h_1+1]$ are each reduced by $3h_1-2$, mapping to the range $4$ to $k$, while $3h_1+1$ becomes $k+1$.
    Finally, rows 2 and 3 are swapped, columns 1 and 3 are swapped, and symbols 1 and 2 are swapped. 
    The cells $O_2(i,j)$, $i,j \in \{1,2,3,r+3\}$, contain only elements of $\{1,2,3,k+1\}$, and thus represent a subsquare. So we can make the substitution 
\newline

    $\begin{array}{c|c|c|c|c|}
O_2(i,j) & j=1 & j=2 & j=3 & j=r+3 \\ 
\hline
i=1 & h_1^2 \{1\} & h_1^2\{3\} & h_1(h_1-1)\{2\} & h_1\{2\} \\
&&&+h_1\{k+1\}& \\
\hline
i=2 & h_1(h_1-1)\{3\} & h_1^2\{2\} & h_1^2 \{1\} & h_1\{3\} \\
&+h_1\{k+1\}&&& \\
\hline
i=3 & h_1^2\{2\} & h_1(h_1-1)\{1\} & h_1^2\{3\} & h_1\{1\} \\
&&+h_1\{k+1\}&& \\
\hline
i=r+3 & h_1\{3\} & h_1\{1\} & h_1\{2\} & \{k+1\} \\
\hline
\end{array}$
\newline

    We now form an outline rectangle $O_3$ associated to $(P',P',P)$ by further amalgamating the symbols $k$ and $k+1$ in $O_2$, with the new symbol labelled $k$.
    Observe that for each $i \in [k]\setminus \{3\}$, and every $x,y\in (P[h_i]-3h_1+3)$, $O_3(x,y)=\{i\}$, with the exception of $i=k$ and exactly one of $x,y$ equal to $r+3$.
    To complete the construction of $O$, we therefore identify trades that replace the multisets $O_3(x,r+3)$ and $O_3(r+3,x)$ with $\{k\}$, for all $x \in (P[h_k]-3h_1+3) \setminus \{r+3\}$. 

    For $i=1,2$, let $T'_i = \{(y-3h_1+2,x-3h_1+2,z-3h_1+2) | (x,y,z) \in T_i\}$.
    Then for each $(x,y,z) \in T'_1$, we have $O_3(x,y)=\{2\}$ and $z \in O_3(1,y), O_3(x,3)$, and for each $(x,y,z) \in T'_2$, we have $O_3(x,y)=\{1\}$ and $z \in O_3(x,2), O_3(3,y)$.

    We identify four sequences of size $h_k-1$: for $i \in [h_k-1]$ let $a_{i} = r+3-i$, and let $b_i < r+3$, $c_i<r+4-h_k$ and $d_i$ be the values satisfying $O_3(r+3,b_i)=\{k\}$, $O_3(c_{i},a_{i})=\{k\}$, and $O_3(r+3,a_{i})=\{d_{i}\}$. Note that $b_i,c_i,d_i \in [r+2]\setminus [3]$. For $b_i$ and $d_i$, this follows from the fact that for each $x\in[3]$, $O_3(r+3,x)=O_2(r+3,x)=h_1{y}$ for some $y\in[3]$. For $c_i$, we observe that $O_3(x,a_i)=O_2(x,a_i)=\{k\}$ for all $x \in [r+2]\setminus [r+3-h_k]$. The cell $(c_i,a_i)$ contains the remaining copy of $k$ in column $a_i$ of $O_3$, which corresponds to the copy of $k+1$ in column $a_i$ of $O_2$. In the array above we see that rows $1,2,3,r+3$ of $O_2$ contain all available copies of $k+1$ in columns $1,2,3,r+3$, so $c_i\in [r+2]\setminus [3]$.
    Let $U = \{(y_j,x_j,z_j) \mid j \in [u]\}$ be the minimal subset of $T'_1$ such that $a_i, b_i \in \{x_j \mid j \in [u]\}$, $c_i \in \{y_j \mid j \in [u]\}$, $d_i \in \{z_j \mid j \in [u]\}$. Since $\{a_i \mid i \in [h_k-1]\}$ and $\{b_i \mid i \in [h_k-1]\}$ must be disjoint, $2(h_k-1) \le u=|U| \le 4(h_k-1)$. The trade is defined in the following table:
    \newline

    $\begin{array}{c|c|c|c}
    \text{cell} & \text{parameter} & \text{removed} & \text{added} \\
    \hline
    (y_j,x_j) & j\in [u] & \{2\} & \{z_j\} \\
    (y_j,3) & j\in [u] & \{z_j\} & \{2\} \\
    (1,x_j) & j\in [u] & \{z_j\} & \{2\} \\
    (r+3,a_i) & i\in [h_k-1] & \{d_i\} & \{k\} \\
    (r+3,b_i) & i\in [h_k-1] & \{k\} & \{2\} \\
    (r+3,3) && (h_k-1)\{2\} & \{d_i \mid i \in [h_k-1]\} \\
    (c_i,3) & i\in [h_k-1] & \{2\} & \{k\} \\
    (c_i,a_i) & i\in [h_k-1] & \{k\} & \{2\} \\
    (1,a_i) & i\in [h_k-1] & \{2\} & \{d_i\} \\
    (1,b_i) & i\in [h_k-1] & \{2\} & \{k\} \\
    (1,3) & & u\{2\} & \{z_j \mid j\in [u] \} \\
    (1,3) & & \{d_i \mid i\in [h_k-1] \} & (h_k-1)\{2\}\\
    (1,3) & & (h_k-1)\{k\} & (h_k-1)\{2\} \\
\end{array}$
\newline

This trade replaces $O_3(r+3,i)$ with $\{k\}$ for $i\in (P[h_k]-3h_1+3) \setminus \{r+3\}$. A similar trade replaces $O_3(i,r+3)$ with $\{k\}$: rows and columns are reversed, and $T_2$, row 3, column 2 and symbol 1 are used in place of $T_1$, row 1, column 3, and symbol 2. These trades are disjoint from each other and from the remainder of the required subsquares. We apply both of these trades and then amalgamate rows and columns to give $O$ associated to $(P,P,P)$; each row and column set $P[i]-3h_1+3$ is mapped to $i$, for $i\ge 4$. $O_3(1,3)$ contains $h_1(h_1-1)$ copies of $2$, which is reduced by $u-2(h_k-1)$ in the trade. Thus $O(1,3)$ contains $h_1(h_1-1)+2(h_k-1)-u \ge h_1(h_1-1)-2(h_k-1)$ copies of $2$, and similarly $O(3,2)$ contains at least $h_1(h_1-1)-2(h_k-1)$ copies of $1$.

\end{proof}

\begin{lemma}
\label{blow_up_3}
Let $P=(h_1h_2h_3h_4\dots h_k)$, where $h_1=h_2=h_3$, $h_4 \ge \dots \ge h_k$, and $r=\sum_{i=4}^k h_i$. Suppose that there is an outline rectangle $O$ associated to $(P,P,P)$, such that $O(i,i)$ contains $h_i^2$ copies of $i$ for each $i\in [k]$. Let integers $\beta_1, \beta_2 \ge 0$ be such that $O(2,3)$, $O(3,1)$, and $O(1,2)$ contain at least $\beta_1$ copies of 1, 2, and 3  respectively, and $O(3,2)$, $O(1,3)$, and $O(2,1)$ contain at least $\beta_2$ copies of 1, 2, and 3 respectively.
Let $P'=(g^3h_4\dots h_k)$, such that $g\ge h_1$ and $r(g-h_1) \le 2(g^2-h_1^2)+\beta_1+\beta_2$. 
Then there exists an outline rectangle $O'$ associated to $(P',P',P')$, such that $O'(i,i)$ contains $h_i^2$ copies of $i$ for each $i\in [k] \setminus [3]$, and $g^2$ copies of $i$ for each $i\in [3]$.
\end{lemma}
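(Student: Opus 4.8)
\noindent\emph{Proof plan.} Write $L=\{1,2,3\}$ and $S=\{4,\dots,k\}$, and set $\Delta=2(g^2-h_1^2)-r(g-h_1)$, so that the hypothesis reads exactly $\Delta\ge-(\beta_1+\beta_2)$; assume $g>h_1$, since otherwise $P'=P$ and $O'=O$. The plan is to build $O'$ from $O$ by keeping the $S\times S$ sub-array untouched, enlarging each cross cell (a cell with one index in $L$ and one in $S$) with extra copies of symbols of $L$, and re-balancing the $L\times L$ block. First I would record the rigidity forced by $O(i,i)\supseteq h_i^2\{i\}$ together with $|O(i,i)|=h_i^2$: we get $O(i,i)=h_i^2\{i\}$ for every $i$, so symbol $i$ is already exhausted within row $i$ and column $i$. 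Hence for distinct $i,j\in L$ the cell $O(i,j)$ consists of copies of the unique symbol $\ell(i,j)\in L\setminus\{i,j\}$ together with symbols of $S$, and a cross cell $O(i,j)$ with $i\in L$ contains no copies of symbol $i$ (and symmetrically for columns). Write $a_{ij}$ for the number of copies of $\ell(i,j)$ in $O(i,j)$; by hypothesis $a_{ij}\ge\beta_1$ on the triple $C_1=\{(1,2),(2,3),(3,1)\}$ and $a_{ij}\ge\beta_2$ on $C_2=\{(1,3),(2,1),(3,2)\}$.

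I would then write down $O'$ up to free integers $\delta_{ij}$ (for $i\ne j\in L$): set $O'(i,j)=O(i,j)$ for $i,j\in S$ and $O'(i,i)=g^2\{i\}$ for $i\in L$; let $O'(i,j)$ for distinct $i,j\in L$ consist of $a_{ij}+\delta_{ij}$ copies of $\ell(i,j)$ together with $g^2-a_{ij}-\delta_{ij}$ symbols of $S$; and let each cross cell $O'(i,j)$ equal the corresponding cell of $O$ together with extra copies of the two symbols of $L$ other than whichever of $i,j$ lies in $L$, enough to reach the required size ($gh_j$ or $h_ig$). Comparing the occurrences of $\ell(i,j)$ in a row of $O'$ with those in $O$ shows that the cross cells in row $i$ must gain exactly $g^2-h_1^2-\delta_{ij}$ copies of $\ell(i,j)$; summing over the two off-diagonal $L$-cells of that row and equating with the cell-size growth $r(g-h_1)$ of those cross cells forces $\sum_{j}\delta_{ij}=\Delta$, and likewise down columns. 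The only solutions are $\delta_{ij}=\lambda$ on $C_1$ and $\delta_{ij}=\Delta-\lambda$ on $C_2$ for a single parameter $\lambda$. Non-negativity of the cross-cell extras gives $\delta_{ij}\le g^2-h_1^2$, and non-negativity of the $L$-content of $O'(i,j)$ gives $\delta_{ij}\ge-a_{ij}$; a valid $\lambda$ exists if and only if $\Delta\ge-\bigl(\min_{C_1}a_{ij}+\min_{C_2}a_{ij}\bigr)$, which, since those minima are at least $\beta_1$ and $\beta_2$, is implied by $\Delta\ge-(\beta_1+\beta_2)$, i.e.\ by the hypothesis.

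With $\lambda$, hence all $\delta_{ij}$, fixed, what remains is to choose which symbols of $S$ fill the six off-diagonal $L$-cells and which symbols of $L$ are the extras in the cross cells, so that every row- and column-count of $O'$ is correct. Each of these is a transportation problem: the row/column margins are prescribed by the unchanged $S\times S$ block together with the now-fixed cell sizes, the totals agree by the identities already used, and the forced zeros (symbol $i$ absent from row $i$ of the $L$-block and from the relevant cross cells) do not obstruct a feasible filling --- an explicit near-proportional allocation works. Filling the cells this way and then checking conditions (1)--(3) for an outline rectangle associated to $(P',P',P')$, together with $O'(i,i)=g^2\{i\}$ for $i\in L$ and $O'(i,i)=h_i^2\{i\}$ for $i\in S$, completes the argument; note that we need only the outline rectangle $O'$ itself and do not lift it.

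The step I expect to be the main obstacle is the accounting of the second paragraph: extracting the single linear invariant $\sum_j\delta_{ij}=\Delta$ from the outline-rectangle conditions, identifying that the only binding inequalities are $\delta_{ij}\le g^2-h_1^2$ (non-negativity of the extras pushed into the cross cells) and $\delta_{ij}\ge-a_{ij}$ (non-negativity of the large content left in the $L$-block), and recognising that their simultaneous solvability is precisely $r(g-h_1)\le 2(g^2-h_1^2)+\beta_1+\beta_2$. The transportation step of the third paragraph is routine but must be carried out with the diagonal zeros in mind.
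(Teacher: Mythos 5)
Your proposal is correct and follows essentially the same route as the paper: the paper's parameters $p,q$ (chosen with $p+q=r(g-h_1)$, $0\le p \le g^2-h_1^2+\beta_1$, $0\le q \le g^2-h_1^2+\beta_2$) are exactly your quantities $g^2-h_1^2-\delta_{ij}$ on the two cyclic triples $C_1,C_2$, and its multisets $S_1=\sum p_i\{i\}$, $S_2=\sum q_i\{i\}$ with $p_i+q_i=h_i(g-h_1)$ constitute one explicit solution of your transportation step. The paper simply writes the resulting cell formulas down directly rather than deriving the invariant $\sum_j\delta_{ij}=\Delta$, so nothing in your plan needs to change.
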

\begin{proof}
    Choose integers $p,q$ such that $0\le p \le g^2-h_1^2+\beta_1$, $0\le q \le g^2-h_1^2+\beta_2$, and $p+q = r(g-h_1)$. Choose further non-negative integers $p_i, q_i$, for $i\in [k]\setminus [3]$, such that $p_i+q_i=h_i(g-h_1)$, $\sum_{i=4}^k p_i =p$, and $\sum_{i=4}^k q_i =q$. Define multisets $S_1=\sum_{i=4}^k p_i\{i\}$, $S_2=\sum_{i=4}^k q_i\{i\}$. For any $i,j \in [k]\setminus [3]$, let 
    \begin{eqnarray*}
    O'(1,1) &=& g^2 \{1\}, \\
    O'(2,2) &=& g^2 \{2\}, \\
    O'(3,3) &=& g^2 \{3\}, \\
    O'(1,2) &=& O(1,2) + S_1 - (p-g^2+h_1^2)\{3\}, \\
    O'(2,3) &=& O(2,3) + S_1 - (p-g^2+h_1^2)\{1\}, \\
    O'(3,1) &=& O(3,1) + S_1 - (p-g^2+h_1^2)\{2\}, \\
    O'(2,1) &=& O(2,1) + S_2 - (q-g^2+h_1^2)\{3\}, \\
    O'(3,2) &=& O(3,2) + S_2 - (q-g^2+h_1^2)\{1\}, \\
    O'(1,3) &=& O(1,3) + S_2 - (q-g^2+h_1^2)\{2\}, \\
    O'(1,j) &=& O(1,j) + p_j\{3\} + q_j\{2\}, \\
    O'(2,j) &=& O(2,j) + p_j\{1\} + q_j\{3\}, \\
    O'(3,j) &=& O(3,j) + p_j\{2\} + q_j\{1\}, \\
    O'(i,1) &=& O(i,1) + p_i\{2\} + q_i\{3\}, \\
    O'(i,2) &=& O(i,2) + p_i\{3\} + q_i\{1\}, \\
    O'(i,3) &=& O(i,3) + p_i\{1\} + q_i\{2\}, \\
    O'(i,j) &=& O(i,j)   
\end{eqnarray*}
\end{proof}

\begin{theorem}
Let $3\le m < k$. There exists a realization of $(h_1^mh_{m+1}\dots h_k)$ if $h_1 = \dots = h_m \ge h_{m+1} \ge \dots \ge h_k$ and $(m-1)(h_1+h_{m+1}) < \sum_{i=m+1}^k h_i$. 
\label{jl_combined_construction}
\end{theorem}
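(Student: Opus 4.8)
The plan is to build an \emph{outline square} associated to $P=(h_1^mh_{m+1}\dots h_k)$ in which cell $(i,i)$ holds exactly $h_i^2$ copies of symbol $i$ for every $i\in[k]$, and then apply \Cref{outline to realization} to lift it to the sought realization. Write $r=\sum_{i=m+1}^k h_i$ and regard $P$ as $(g^3T)$ with $g=h_1$ and $T=(h_1^{m-3}h_{m+1}\dots h_k)$, the partition obtained by setting aside three copies of $h_1$; thus $T$ is non-increasing with $\max T=\max(h_1,h_{m+1})$ and $|T|=r'=(m-3)h_1+r$. The engine is \Cref{odd_r_from_circulant} when $r'$ is odd and \Cref{even_r_from_circulant} when $r'$ is even: applied to $(g^3T)$ for a suitable $g\le h_1$, either one returns an outline square with the required diagonal together with guaranteed multiplicities $\beta_1,\beta_2$ on the six off-diagonal cells of the leading $3\times3$ block. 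If $g=h_1$ is admissible we finish by lifting; otherwise we first use \Cref{blow_up_3} to inflate the three order-$g$ subsquares back up to order $h_1$.

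The hypothesis $(m-1)(h_1+h_{m+1})<r$ enters twice. As $h_1\ge h_{m+1}$ it gives $r>2(m-1)h_{m+1}\ge 4h_{m+1}$, which covers every ``$\max T\le(r'\pm O(1))/4$'' side condition; and it gives $r'>(2m-4)h_1+(m-1)h_{m+1}$. For $m\ge 5$ the latter already forces $r'\ge 6h_1$, whence with $g=h_1$ every hypothesis of \Cref{odd_r_from_circulant}, or of \Cref{even_r_from_circulant} (its extra requirement $h_1(h_1-1)\ge 2(h_k-1)$ being immediate from $h_k\le h_1$ and $h_1\ge 2$), is satisfied for $(h_1^3T)$, and one lifts. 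The same direct route works for $m\in\{3,4\}$ once $r$ is large enough. Before this I would clear the degenerate cases: $h_1=1$ forces $P=(1^k)$ with $k>m\ge 3$ (apply \Cref{thm: a^k square}), and if $h_{m+1}=\dots=h_k$ then $P$ uses only two part-sizes with $m\ge 3$ copies of the larger (apply \Cref{thm: squaresatmost2}); so assume $h_1\ge 2$.

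What remains is $m\in\{3,4\}$ in the range where the block $3h_1$ is too large relative to $r'$ for the direct route. Then I would take $g\le h_1$ as large as the circulant hypotheses allow --- essentially $g=\big\lfloor(r'+O(1)-2\max T)/3\big\rfloor$, which one checks satisfies $g\le h_1$ precisely in this range, and for $m=4$ one leaves the spare copy of $h_1$ inside $T$ so that $\max T=h_1$ and $r'=h_1+r$ --- apply \Cref{odd_r_from_circulant} or \Cref{even_r_from_circulant} to $(g^3T)$, and then invoke \Cref{blow_up_3} to pass to $(h_1^3T)$. This is valid provided $r'(h_1-g)\le 2(h_1^2-g^2)+\beta_1+\beta_2$; substituting the circulant outputs --- $\beta_1=\beta_2=g^2$ when $r'$ is odd, and $\beta_1=g^2$, $\beta_2=g(g-1)-2(h_k-1)$ when $r'$ is even --- this becomes $r'(h_1-g)\le 2h_1^2$ and $r'(h_1-g)\le 2h_1^2-g-2(h_k-1)$, respectively. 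Lifting the resulting outline square by \Cref{outline to realization} gives the realization.

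The main obstacle will be precisely this last inequality. One has $h_1-g<\tfrac13\bigl(3h_1-r'+O(1)+2\max T\bigr)$ while $r'$ ranges over a short interval whose left endpoint is essentially $(m-1)(h_1+h_{m+1})$, so $r'(h_1-g)$ is largest near that endpoint; substituting there turns the requirement into a quadratic inequality in $h_1$ and $h_{m+1}$ (and, for $m=4$, the spare $h_1$), which I expect to hold for all admissible parameters with at most finitely many small exceptions. The even-$r'$ subcase is the tight one, since \Cref{even_r_from_circulant} only supplies $g(g-1)-2(h_k-1)$ rather than $g^2$ copies in three of the six off-diagonal cells and one must also check the side conditions $g\ge 2$ and $g(g-1)\ge 2(h_k-1)$ that keep \Cref{even_r_from_circulant} applicable; any residual small instances should be absorbed by \Cref{thm: small k squares}, \Cref{thm: a^k square} and \Cref{thm: squaresatmost2}.
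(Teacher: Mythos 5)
Your architecture coincides with the paper's: view the partition as $(t^3h_4\dots h_k)$ where $h_4=\dots=h_m=h_1$, apply \Cref{odd_r_from_circulant} or \Cref{even_r_from_circulant} according to the parity of $r=\sum_{i=4}^k h_i$, inflate the three order-$t$ subsquares up to order $h_1$ with \Cref{blow_up_3}, and lift via \Cref{outline to realization}; your blow-up inequalities $r(h_1-t)\le 2h_1^2$ (odd case) and $r(h_1-t)\le 2h_1^2-t-2(h_k-1)$ (even case) are exactly the ones the paper derives, and your preliminary reductions ($h_1\ge 2$, at least three distinct part sizes) match as well.

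The gap is that you never prove the crux: that an integer $t$ satisfying both the circulant window $2h_4\le 3t\le r+1-2h_4$ (or its even analogue) and the blow-up bound actually exists in the hard range. You reduce to ``a quadratic inequality \dots which I expect to hold \dots with at most finitely many small exceptions,'' to be ``absorbed by'' \Cref{thm: small k squares,thm: a^k square,thm: squaresatmost2}. That fallback cannot close the gap: those results apply only when $k\le 4$ or when the partition has at most two distinct part sizes, so they say nothing about a generic exceptional tuple, and nothing in your argument shows the exceptional set is finite. The paper closes this by an explicit estimate: the hypothesis gives $4h_4\le r-3$; the admissible range for $t$ is $\bigl[\max\{\tfrac{2}{3}h_4,\,h_1-\tfrac{2}{r}h_1^2\},\ \min\{\tfrac{1}{3}(r+1-2h_4),\,h_1\}\bigr]$, and when $h_1$ exceeds the upper endpoint one shows this interval has length at least $\tfrac{2}{3}$ by proving $6h_1^2-3rh_1+r(r-1-2h_4)\ge 0$ for all $h_1$, which holds because the discriminant condition reduces to $8+16h_4\le 5r$; the even case is the same computation with the extra $-6(h_k-1)$ term absorbed using $h_k\le h_4-1$, ending at $5r^2-r(24+16h_4)+112-32h_4\ge 0$. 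This verification is the substantive content of the theorem and must be carried out rather than deferred.
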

\begin{proof}

Let $r=\sum_{i=4}^k h_i$, so the condition can be written as $(2m-4)h_1+(m-1)h_{m+1}<r$. If $h_1>h_4$ then $m=3$ and $2(h_1+h_4)<r$ and hence $4h_4 < r-2$. If $h_1=h_4$ then $m \ge 4$, so $4h_1+3h_{m+1}<r$, giving $4h_4=4h_1<r-3$. So in all cases $4h_4<r-2$.
We may assume $h_1 \ge 3$, $h_4 \ge 2$, and $h_4 \ge h_k+1$, since otherwise there are at most two distinct subsquare sizes, and the result follows from \Cref{thm: small k squares,thm: a^k square,thm: squaresatmost2}.

First suppose that $r$ is odd. By \Cref{odd_r_from_circulant} and \Cref{blow_up_3}, there exists an outline rectangle that lifts to the required realization if there exists an integer $t$ such that $2h_4 \le 3t \le r+1-2h_4$ and $0 \le r(h_1-t) \le 2h_1^2$; that is, if  $\max\{\frac{2}{3}h_4,h_1-\frac{2}{r}h_1^2 \}\leq t\leq \min\{ \frac{1}{3}(r+1-2h_4), h_1 \}$. 
If $h_1 \le \frac{1}{3}(r+1-2h_4)$ then we choose $t=h_1$.
Otherwise, $t$ exists provided that 
$\frac{1}{3}(r+1-2h_4) - \max\{\frac{2}{3}h_4,h_1-\frac{2}{r}h_1^2 \} \ge \frac{2}{3}$.
Now $4h_4 \leq r-4$, so $\frac{1}{3}(r+1-2h_4) - \frac{2}{3}h_4 \geq \frac{5}{3}$. Thus we are left to prove that
$\frac{1}{3}(r+1-2h_4) - h_1+\frac{2}{r}h_1^2 \geq \frac{2}{3}$; that is,
$6h_1^2-3rh_1+r(r-1-2h_4) \geq 0$. This holds for all $h_1$ provided that the quadratic discriminant is less than or equal to zero. This reduces to the condition $8+16h_4 \leq 5r$, which follows from $h_4 \leq \frac{r-3}{4}$.

Now suppose that $r$ is even. Then $4h_4 < r-2$ implies that $h_4 \le (r-4)/4$. By \Cref{even_r_from_circulant,blow_up_3}, there exists an outline rectangle that lifts to the required realization if $t(t-1)\ge 2(h_k-1)$, $2h_4+2 \le 3t+1 \le r+1-2h_4$ and $0 \le r(h_1-t) \le 2(h_1^2-t^2) + t^2 + t(t-1)-2(h_k-1) = 2h_1^2 - t - 2(h_k-1)$ for some integer $t$. 
If the second condition holds then $t\geq \frac{2}{3}h_4+\frac{1}{3}$. Together with the assumption that $h_4\geq h_k+1$ this gives 
$t(t-1) - 2(h_k-1) \geq \frac{4}{9}h_k^2-\frac{4}{3}h_k+2 = \frac{4}{9}(h_k-\frac{3}{2})^2+1\geq 1$, satisfying the first condition.
Therefore the realisation exists if there is an integer $t$ satisfying 
$$\max\left\{\frac{1}{3}(2h_4+1),\frac{1}{r-1}(rh_1-2h_1^2+2h_k-2) \right\}\leq t\leq \min\left\{ \frac{1}{3}(r-2h_4), h_1 \right\}.$$
If $h_1 \le \frac{1}{3}(r-2h_4)$ then we choose $t=h_1$; the lower bound follows immediately from $h_1 \ge h_4 \ge h_k+1$ and $h_1 \ge 2$. 
Otherwise, $t$ exists provided that 
$\frac{1}{3}(r-2h_4) - \max\{\frac{1}{3}(2h_4+1),\frac{1}{r+1}(rh_1-2h_1^2-2h_k+2) \} \ge \frac{2}{3}$.
Now $4h_4 \leq r-4$, so $\frac{1}{3}(r-2h_4) - \frac{1}{3}(2h_4+1) \geq 1$. Thus we are left to prove that
$\frac{1}{3}(r-2h_4) - \frac{1}{r-1}(rh_1-2h_1^2+2h_k-2) \geq \frac{2}{3}$; that is,
$6h_1^2-3rh_1+(r-1)(r-2h_4-2) -6(h_k-1) \geq 0.$
Since $h_k \le h_4-1$, it is sufficient to prove that
$$6h_1^2-3rh_1+r^2-3r-2h_4r-4h_4+14 \geq 0.$$

This holds for all $h_1$ provided that the quadratic discriminant is less than or equal to zero. This reduces to the condition
$5r^2-r(24+16h_4)+112-32h_4 \geq 0$.
Since $4h_4 \le r-4$, $5r^2-r(24+16h_4)+112-32h_4 \ge r^2 -16r +144 \ge 80$, so we are done.
\end{proof}

\section{Frequency arrays}

In this section, we build on the realizations found in the previous section to prove our main result.

Frequency arrays were introduced in \cite{kemp2025further} and are similar to outline squares.

\begin{definition}
    A \emph{frequency array} $F$ of order $k$ is a $k\times k$ array, where each cell contains a single non-negative integer.
\end{definition}

\begin{definition}
    Let $O$ be a $k\times k$ array of multisets. $O(i,j)$ denotes the multiset of symbols in cell $(i,j)$, $O_{\ell}^i$ and $^jO_\ell$ denote the number of copies of symbol $\ell$ in row $i$ and column $j$ respectively.
    Then $O$ is an \emph{outline array} corresponding to a frequency array $F$ of order $k$, if
    \begin{itemize}
        \item $|O(i,j)| = F(i,j)$,
        \item $O_\ell^i = F(i,\ell)$, and
        \item $^jO_\ell = F(\ell,j)$.
    \end{itemize}
\end{definition}

An outline square associated to $P = (h_1\dots h_k)$ is equivalent to an outline array for a frequency array of order $k$ with $F(i,j) = h_ih_j$.

The proofs of \Cref{lemma: sum freq arrays,lemma: summing rows/columns of outline array} can be found in \cite{kemp2025further}.

\begin{lemma}
\label{lemma: sum freq arrays}
    If $O_1$ and $O_2$ are outline arrays corresponding to the frequency arrays $F_1$ and $F_2$ respectively, then there exists an outline array $O^*$ corresponding to the frequency array $F^*$ where $F^*(i,j) = F_1(i,j) + F_2(i,j)$.
\end{lemma}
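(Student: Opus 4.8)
The plan is to construct $O^*$ directly from $O_1$ and $O_2$ by the cellwise multiset union. For each $(i,j)\in[k]\times[k]$, let $O^*(i,j)=O_1(i,j)\uplus O_2(i,j)$, the multiset in which each symbol $\ell$ occurs with multiplicity equal to its multiplicity in $O_1(i,j)$ plus its multiplicity in $O_2(i,j)$. No relabelling, rearrangement, or lifting is required: the entire argument is the observation that the three quantities defining an outline array are additive under $\uplus$, so $O^*$ corresponds to the frequency array $F_1+F_2=F^*$.

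Concretely, I would verify the three defining conditions in turn. First, $|O^*(i,j)|=|O_1(i,j)|+|O_2(i,j)|=F_1(i,j)+F_2(i,j)=F^*(i,j)$, since cardinalities add under $\uplus$. Second, the number of copies of symbol $\ell$ in row $i$ of $O^*$ is $\sum_{j=1}^k(\text{copies of }\ell\text{ in }O^*(i,j))$, which splits into the corresponding sum for $O_1$ plus that for $O_2$, giving $(O_1)^i_\ell+(O_2)^i_\ell=F_1(i,\ell)+F_2(i,\ell)=F^*(i,\ell)$. Third, the column condition follows identically, with the roles of rows and columns exchanged. It only remains to note that $F^*$ is again a $k\times k$ array of non-negative integers, hence a legitimate frequency array, and that every symbol appearing in $O^*$ appears in $O_1$ or in $O_2$ and so lies in $[k]$; thus $O^*$ really is an outline array of order $k$ corresponding to $F^*$.

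I do not expect a genuine obstacle here: the whole content is that cell size, row-symbol count and column-symbol count are all additive over cellwise multiset union. The only point deserving a moment's care is confirming that the definition of outline array carries no hidden side condition — a fixed symbol alphabet, a prohibition on empty cells, a row/column-sum normalisation — that could be destroyed by addition; none is present, since unions preserve the symbol set and a zero entry of $F^*$ simply yields an empty cell. It is worth contrasting this with the genuinely delicate direction in this circle of ideas, namely deciding when a prescribed frequency array is realised by \emph{some} outline array at all — which is precisely where the lifting results (\Cref{outline rectangle to square}, \Cref{outline to realization}) and the constructions of Section~2 do their work. The additive closure needed here is, by comparison, immediate, which is presumably why the authors simply cite \cite{kemp2025further}.
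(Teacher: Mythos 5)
Your proof is correct. Note that the paper does not actually prove this lemma itself but defers to \cite{kemp2025further}; the cellwise multiset union you give is the standard argument, and since all three defining conditions of an outline array are purely additive counting conditions (with no hidden realizability or lifting requirement in the definition used here), your verification is complete and is essentially the intended proof.
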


Observe that a realization $\LS(h_1\dots h_k)$ is equivalent to the sum of outline arrays $O_1$ and $O_2$, corresponding to $F_1$ and $F_2$, where $F_1(i,j) = h_ih_j$ for all $i\neq j$, $F_1(i,i) = 0$, $F_2(i,j) = 0$ when $i\neq j$ and $F_2(i,i) = h_i^2$.

\begin{lemma}
\label{lemma: summing rows/columns of outline array}
    If an outline array $O$ exists for an order $k$ frequency array $F$, then for any partition $S_1,S_2,\dots,S_{k'}$ of $[k]$, an outline array $O^*$ exists for the order $k'$ array $F^*$, where for all $i,j\in[k']$ $$F^*(i,j) = \sum_{x\in S_i}\sum_{y\in S_j}F(x,y).$$
\end{lemma}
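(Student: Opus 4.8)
The plan is to build $O^{*}$ directly from $O$ by amalgamating rows, columns, and symbols \emph{simultaneously} according to the single partition $S_{1},\dots,S_{k'}$, and then to check the three defining conditions of an outline array by reindexing finite sums. Concretely, for $i,j\in[k']$ I would let $O^{*}(i,j)$ be the multiset union (adding multiplicities) of all cell-multisets $O(x,y)$ with $x\in S_{i}$ and $y\in S_{j}$, with every symbol $\ell\in[k]$ relabelled to the unique index $m\in[k']$ for which $\ell\in S_{m}$. This is the frequency-array analogue of amalgamating rows, columns and symbols of a latin square, specialised to the case where one common partition acts on all three; it is legitimate here because an outline array indexes its symbols by the same set $[k]$ as its rows and columns.

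Then I would verify the three properties in turn. The size condition is immediate, since relabelling preserves cardinality: $|O^{*}(i,j)|=\sum_{x\in S_{i}}\sum_{y\in S_{j}}|O(x,y)|=\sum_{x\in S_{i}}\sum_{y\in S_{j}}F(x,y)=F^{*}(i,j)$. For the row condition, the number of copies of the relabelled symbol $m$ appearing in row $i$ of $O^{*}$ equals $\sum_{j\in[k']}\sum_{x\in S_{i}}\sum_{y\in S_{j}}\bigl(\text{number of copies of symbols of }S_{m}\text{ in }O(x,y)\bigr)$; since $S_{1},\dots,S_{k'}$ partitions $[k]$, the outer sum over $j$ together with the sum over $y\in S_{j}$ is just a sum over all $y\in[k]$, so this collapses to $\sum_{x\in S_{i}}\sum_{\ell\in S_{m}}O^{x}_{\ell}=\sum_{x\in S_{i}}\sum_{\ell\in S_{m}}F(x,\ell)=F^{*}(i,m)$. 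The column condition is the transpose of this argument, using $^{y}O_{\ell}=F(\ell,y)$ in place of $O^{x}_{\ell}=F(x,\ell)$, giving $F^{*}(m,j)$.

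I do not expect a genuine obstacle: the whole argument is bookkeeping. The one thing to be careful about is relabelling the symbols by the \emph{same} partition used to merge rows and columns, and then matching the row and column frequency conditions of an outline array (which are phrased via $F(i,\ell)$ and $F(\ell,j)$) to the correct triple sums after amalgamation. Once the sums are set up, everything follows by interchanging the order of summation over a finite index set.
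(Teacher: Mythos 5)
Your proof is correct: amalgamating rows, columns and symbols simultaneously by the common partition and then collapsing the triple sums is exactly the natural argument, and your verification of all three outline-array conditions is complete. The paper itself does not prove this lemma but defers it to \cite{kemp2025further}, and your construction is the standard one that proof would use, so there is no substantive difference to report.
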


\begin{lemma}
\label{lemma: frequency array to add on}
    For $3\leq m\leq k$, if $h_m\geq h_{m+1}\geq\dots\geq h_{k}$ and $\sum_{n=m+2}^{k}h_n \leq (m-1)h_1 + (m-2)h_{m+1}$, then an outline array exists corresponding to $F$ of order $k$, where
    $$F(i,j) = \begin{cases}
        0 & \text{if $i=j$}\\
        h_m+h_{m+1} & \text{if $i\neq j$ and $i,j\leq m$}\\
        h_{j} & \text{if $i\leq m$ and $j>m$}\\
        h_{i} & \text{if $i>m$ and $j\leq m$}\\
        0 & \text{otherwise.}
    \end{cases}$$
\end{lemma}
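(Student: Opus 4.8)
\quad Write $r=\sum_{j=m+1}^{k}h_j$ and $c=h_m+h_{m+1}$; rewriting the hypothesis, $r\le(m-1)c$ (we use $h_1=h_m$ here, as is the case in all applications, and note that $h_1,\dots,h_{m-1}$ do not occur in $F$ at all). If $m=k$ the statement is immediate, so assume $m<k$. The first thing I would do is record what any outline array $O$ for $F$ must look like. Since $F(i,j)=0$ for all $i,j>m$, the bottom-right $(k-m)\times(k-m)$ block of $O$ is empty, so $O$ is carried by the ``arm'' cells (rows $\le m$ against columns $>m$, and vice versa) together with the $m\times m$ off-diagonal block on rows and columns $[m]$. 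For a symbol $\ell>m$ one has ${}^{i}O_\ell=F(\ell,i)=h_\ell$ for $i\le m$ but ${}^{j}O_\ell=F(\ell,j)=0$ and $O_\ell^{j}=F(j,\ell)=0$ for $j>m$; hence every symbol $>m$ occurs only inside the $m\times m$ block, $h_\ell$ times in each of its rows and columns. In particular each block row holds exactly $r$ symbols $>m$, and since it has only $(m-1)c$ cells in total, feasibility already forces $r\le(m-1)c$. It follows that every arm cell contains only symbols $\le m$.

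The construction would then proceed by choosing the following data, each step a standard bipartite-multigraph existence fact whose slack comes from $r\le(m-1)c$. (i) An $r$-regular $m\times m$ matrix $\mathbf A=(\alpha_{i,i'})$ with zero diagonal and all entries $\le c$, where $\alpha_{i,i'}$ is to be the number of copies of symbol $i'$ in the arm cells of row $i$ (so $c-\alpha_{i,i'}$ copies of $i'$ lie in the block cells of row $i$), and likewise a matrix $\mathbf A'=(\alpha'_{i',j})$ for the columns; take both as balanced as possible. (ii) For each $i'\le m$, a non-negative $(m-1)\times(m-1)$ matrix $N^{(i')}$ on rows and columns $\ne i'$, with zero diagonal, row sums $c-\alpha_{i,i'}$ and column sums $c-\alpha'_{i',j}$; by Gale--Ryser for transportation arrays with a forbidden diagonal this exists precisely when $\alpha_{i,i'}+\alpha'_{i',i}\ge r-(m-3)c$ for all $i\ne i'$, which balanced choices in (i) achieve. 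Here $N^{(i')}_{i,j}$ records how many copies of symbol $i'$ go into block cell $(i,j)$. (iii) Set $\mathbf B'_{i,j}=c-\sum_{i'}N^{(i')}_{i,j}$ on off-diagonal block cells (the number of symbols $>m$ in cell $(i,j)$); a short count shows $\mathbf B'$ is automatically $r$-regular with zero diagonal, and balanced choices keep it non-negative. (iv) By K\"onig's edge-colouring theorem an $r$-regular bipartite multigraph splits into $r$ perfect matchings, and since $r=\sum_{\ell>m}h_\ell$, bundling these into groups of sizes $h_{m+1},\dots,h_k$ decomposes each of $\mathbf B'$, $\mathbf A$, $\mathbf A'$ and the transposes of the last two into spanning $h_\ell$-regular zero-diagonal subgraphs; these say which symbol $>m$ occupies which slot of each block cell and which symbol $\le m$ occupies which slot of each arm cell.

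Assembling $O$ from this data and checking the three defining identities $|O(i,j)|=F(i,j)$, $O^i_\ell=F(i,\ell)$, ${}^jO_\ell=F(\ell,j)$ is then bookkeeping: block cell $(i,j)$ receives $\mathbf B'_{i,j}$ symbols $>m$ and $\sum_{i'}N^{(i')}_{i,j}=c-\mathbf B'_{i,j}$ symbols $\le m$; in row $i$ each symbol $i'\le m$ is split between block and arms as $c=(c-\alpha_{i,i'})+\alpha_{i,i'}$ by construction; the arm and leaf cell sizes come out right from (iv); and the symbols $>m$ have exactly the prescribed counts.

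The step I expect to be hardest is (ii)--(iii): choosing $\mathbf A$, $\mathbf A'$ and all the $N^{(i')}$ \emph{simultaneously} so that every margin, every forbidden-diagonal condition, and $\mathbf B'\ge 0$ hold together, with the whole system's slack supplied by the single inequality $r\le(m-1)c$. Each ingredient (an $r$-regular matrix with bounded entries, a transportation array avoiding its diagonal, an equitable splitting of a regular bipartite multigraph) is elementary, but coordinating them needs care, particularly at the extreme $r=(m-1)c$ (where the block consists entirely of symbols $>m$, forcing every $\alpha_{i,i'}=c$) and when $(m-1)\nmid r$ (where one cannot keep everything constant and must use circulant-type configurations instead). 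I would work directly with the outline array rather than try to write $F$ as a sum, via \Cref{lemma: sum freq arrays}, of off-diagonal frequency arrays of known small realizations: the vanishing of $F$ on the leaf-leaf block forces every such realization to involve only one leaf, after which reproducing the leaf weights $h_{m+1},\dots,h_k$ exactly becomes its own fiddly arithmetic problem, so the gain in cleanliness is illusory.
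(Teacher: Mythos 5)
There is a genuine gap: your construction stops exactly at the point you identify as hardest, and that point is not routine. The structural analysis at the start is correct (symbols exceeding $m$ live only in the off-diagonal $m\times m$ block, $h_\ell$ per block row and column; arm cells carry only symbols at most $m$; feasibility forces $r\le(m-1)c$). But the simultaneous choice of $\mathbf{A}$, $\mathbf{A}'$ and the $m$ forbidden-diagonal transportation arrays $N^{(i')}$ so that both $\alpha_{i,i'}+\alpha'_{i',i}\ge r-(m-3)c$ for every $i\ne i'$ and $\mathbf{B}'_{i,j}=c-\sum_{i'}N^{(i')}_{i,j}\ge 0$ on every block cell is asserted to follow from ``balanced choices'' and never carried out. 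This is not a harmless omission: for $m=3$ the forbidden-diagonal condition reads $\alpha_{i,i'}+\alpha'_{i',i}\ge r$, and summing it over all six ordered pairs shows it must hold with \emph{equality} everywhere, so $\mathbf{A}'$ is forced by $\mathbf{A}$ via $\alpha'_{i',i}=r-\alpha_{i,i'}$ rather than chosen independently ``as balanced as possible''; the case $r=(m-1)c$ is similarly tight for all $m$; and the nonnegativity of $\mathbf{B}'$ couples all $m$ arrays $N^{(i')}$, which you construct one symbol at a time. Until these interlocking constraints are resolved (e.g.\ by an explicit circulant-type assignment), the proof is incomplete.

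Moreover, your stated reason for rejecting the decomposition route is based on a misreading, and that route is how the paper actually proves the lemma, far more cheaply. It is not true that the vanishing of $F$ on the leaf--leaf block forces each summand realization to involve only one leaf: the paper partitions the multiset $A=\sum_{i>m}h_i\{i\}$ into $h_m+h_{m+1}$ submultisets $A_n$ of size at most $m-1$ (possible precisely because $r\le(m-1)c$), takes for each an $\LS(1^m|A_n|^1)$, deletes the diagonal blocks as in the $F_1+F_2$ decomposition noted after \Cref{lemma: sum freq arrays}, and splits the rows, columns and symbols of the single order-$|A_n|$ subsquare among the leaves according to $A_n(m+1),\dots,A_n(k)$ using \Cref{lemma: summing rows/columns of outline array}. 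The leaf--leaf block vanishes because the deleted subsquare \emph{is} the entire leaf--leaf block, and the leaf weights $h_{m+1},\dots,h_k$ come out exactly right because $\sum_n A_n(j)=h_j$ by construction; there is no further arithmetic to do. You should either complete the coordination step of your direct construction or adopt this decomposition.
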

\begin{proof}
    Let $A = \sum_{i=m+1}^k h_i\{i\}$. Since $|A| = \sum_{n=m+1}^{k}h_n \leq (m-1)(h_m+h_{m+1})$, partition $A$ into multisets $A_1,A_2,\dots A_{h_m+h_{m+1}}$, where $0\leq |A_n| \leq m-1$ for all $n\in[h_m+h_{m+1}]$.

    Let $A_n(s)$ be the number of copies of $s$ in $A_n$, for $n\in[h_m+h_{m+1}]$ and $s\in m+[k-m]$. For each $n\in[h_m+h_{m+1}]$, let $F_n$ be a frequency array of order $k$ where
    $$F_n(i,j) = \begin{cases}
        0 & \text{if $i=j$ or $i,j>m$,}\\
        1 & \text{if $i\neq j$ and $i,j\leq m$,}\\
        A_n(i) & \text{if $i>m$ and $j\leq m$,}\\
        A_n(j) & \text{if $i\leq m$ and $j>m$.}
    \end{cases}$$
    An outline array corresponding to $F_n$ is found by taking an $\LS(1^m|A_n|^1)$, which always exists by \Cref{thm: small k squares,thm: a^k square,thm: squaresatmost2}. Use the partition $A_n(m+1),A_n(m+2),\dots,A_n(k)$ of $A_n$ and \Cref{lemma: summing rows/columns of outline array} to obtain the required outline array $O_n$.

    Observe that $F = \sum_{n=1}^{h_m+h_{m+1}} F_n$. Thus, by \Cref{lemma: sum freq arrays}, an outline array exists corresponding to $F$.
\end{proof}

We now use frequency arrays and \Cref{jl_combined_construction} to prove \Cref{conj: largest 3}.

\begin{theorem}
\label{thm: main}
    Let $3\leq m\leq k$. There exists an $\LS(h_m^mh_{m+1}\dots h_k)$ for all $h_m\geq h_{m+1}\geq\dots\geq h_k$.
\end{theorem}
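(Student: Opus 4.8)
The plan is to reduce the general case to the regime already handled by \Cref{jl_combined_construction}, using the additive structure of frequency arrays. Recall that \Cref{jl_combined_construction} produces a realization of $(h_1^m h_{m+1}\dots h_k)$ precisely when the tail is large compared to the first parts, namely $(m-1)(h_1+h_{m+1})<\sum_{i=m+1}^k h_i$. The complementary case is $\sum_{i=m+1}^k h_i \le (m-1)(h_m+h_{m+1})$ (with $h_1=h_m$), and this is exactly the hypothesis under which \Cref{lemma: frequency array to add on} supplies an outline array. So the strategy is a dichotomy on the size of the tail $r=\sum_{i=m+1}^k h_i$.

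First, if $(m-1)(h_m+h_{m+1})<r$, invoke \Cref{jl_combined_construction} directly to obtain the realization. Second, if $r\le (m-1)(h_m+h_{m+1})$, I would build the outline square associated to $(h_m^m h_{m+1}\dots h_k)$ as a sum of two outline arrays via \Cref{lemma: sum freq arrays}. The diagonal part $F_2$ with $F_2(i,i)=h_i^2$ and zeros off-diagonal is realized by the trivial outline array consisting of the appropriate subsquares (equivalently, $\LS(h_i^1)$ in each diagonal cell). For the off-diagonal part, I would write the target off-diagonal frequency array as $F_{\mathrm{off}}(i,j)=h_ih_j$ (for $i\ne j$) and decompose it. The key building block is \Cref{lemma: frequency array to add on}: with $m$ replaced suitably and parts $h_m\ge\dots\ge h_k$, it gives an outline array whose frequency array has $h_m+h_{m+1}$ on the top-left $m\times m$ off-diagonal block, $h_j$ (resp.\ $h_i$) on the rectangular blocks coupling $[m]$ to $\{m+1,\dots,k\}$, and $0$ elsewhere. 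Iterating this — peeling off one "layer" at a time, each layer an application of \Cref{lemma: frequency array to add on} with a shrinking index set — and summing, together with handling the all-within-tail block $\{m+1,\dots,k\}$ by induction on $k$ or by another application of the lemma, should reconstruct $F_{\mathrm{off}}$ exactly. Summing all these outline arrays with \Cref{lemma: sum freq arrays} and lifting via \Cref{outline rectangle to square} / \Cref{outline to realization} yields the realization.

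The arithmetic bookkeeping is the delicate part: I expect the main obstacle to be verifying that the layers from repeated use of \Cref{lemma: frequency array to add on} sum exactly to the required $h_ih_j$ in every cell, and that at each stage the lemma's hypothesis $\sum_{n=m+2}^k h_n \le (m-1)h_1+(m-2)h_{m+1}$ (with the indices shifted appropriately for the current layer) is actually implied by the standing assumption $r\le(m-1)(h_m+h_{m+1})$. In particular one must be careful when many parts are equal versus when they decrease, and when $m=k$ (no tail at all), in which case the claim is just \Cref{thm: a^k square} applied to $(h_m^m)$ with $m\ge 3\ne 2$. A clean way to organize the induction is on $k-m$: the base case $k=m$ is \Cref{thm: a^k square}; for the inductive step, split off $h_k$, use \Cref{lemma: frequency array to add on} to attach a single new part (or group of smallest parts) to a realization of the smaller partition guaranteed by induction, checking that the side condition of that lemma holds under $r\le(m-1)(h_m+h_{m+1})$, and otherwise fall back on \Cref{jl_combined_construction}. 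Either way, once the two cases of the dichotomy are in place, \Cref{outline to realization} finishes the proof.
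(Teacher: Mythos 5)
Your high-level plan matches the paper's: a dichotomy at each stage between \Cref{jl_combined_construction} (large tail) and a frequency-array construction via \Cref{lemma: frequency array to add on} (small tail), with \Cref{thm: a^k square} as the base case. But the second branch, as you describe it, has a genuine gap: you propose to reconstruct the entire off-diagonal array $F_{\mathrm{off}}(i,j)=h_ih_j$ by summing layers each obtained from \Cref{lemma: frequency array to add on}. That cannot work as stated. Every frequency array produced by that lemma is identically zero on the tail-tail block $\{m+1,\dots,k\}\times\{m+1,\dots,k\}$, so no sum of such layers yields the required $h_ih_j$ there; and in the top-left block each layer contributes the \emph{linear} quantity $h_m+h_{m+1}$, so you also need to explain how the sum reaches the \emph{quadratic} target $h_ih_j$. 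Your fallback of ``handling the all-within-tail block by induction on $k$'' is not well-defined, since a realization of the tail $(h_{m+1}\dots h_k)$ alone need not exist, and your alternative induction on $k-m$ that ``splits off $h_k$ and attaches a single new part'' does not match what \Cref{lemma: frequency array to add on} actually provides (its array couples all of $[m]$ to the whole tail at once, with a fixed $h_m+h_{m+1}$ in the top block).

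The missing idea in the paper is to induct on the number of equal largest parts, keeping the total number of parts fixed at $k$: assume a realization of $(h_{\ell+1}^{\ell+1}h_{\ell+2}\dots h_k)$, reduce it modulo $(P,P,P)$ to an outline array, empty its diagonal, and add exactly $h_\ell-h_{\ell+1}$ copies of the array from \Cref{lemma: frequency array to add on} (applied with $m=\ell$, whose hypothesis is precisely the negation of the condition in \Cref{jl_combined_construction}). The tail-tail block then comes for free from the inherited realization, the coupling blocks grow linearly from $h_{\ell+1}h_j$ to $h_\ell h_j$, and the top block works out because $h_{\ell+1}^2+(h_\ell-h_{\ell+1})(h_\ell+h_{\ell+1})=h_\ell^2$. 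Note also that the dichotomy must be re-tested at every step $\ell$ of this induction, not just once for the original partition, since $(\ell-1)(h_\ell+h_{\ell+1})$ versus $\sum_{i=\ell+1}^k h_i$ can fall on different sides for different $\ell$. Without this mechanism your sketch does not close.
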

\begin{proof}
    Note that an $\LS(h_k^k)$ exists by \Cref{thm: a^k square}. Now take $m\leq \ell<k$ and suppose that an $\LS(h_{\ell+1}^{\ell+1}h_{\ell+2}\dots h_k)$ exists. If $(\ell-1)(h_{\ell}+h_{\ell+1})<\sum_{i=\ell+1}^kh_i$, then an $\LS(h_\ell^\ell h_{\ell+1}\dots h_k)$ exists by \Cref{jl_combined_construction}. Otherwise, by \Cref{lemma: frequency array to add on}, there exists an outline array $O$ for the frequency array $F$ of order $k$ where
    $$F(i,j) = \begin{cases}
        0 & \text{if $i=j$}\\
        h_\ell+h_{\ell+1} & \text{if $i,j\leq \ell$}\\
        h_{j} & \text{if $i\leq \ell$ and $j>\ell$}\\
        h_{i} & \text{if $i>\ell$ and $j\leq \ell$}\\
        0 & \text{otherwise.}
    \end{cases}$$
    The reduction modulo $(P,P,P)$ for $P=(h_{\ell+1}^{\ell+1}h_{\ell+2}\dots h_k)$ of the $\LS(h_{\ell+1}^{\ell+1}h_{\ell+2}\dots h_k)$ gives an outline array $O'$ of order $k$. Set $O'(i,i) = \emptyset$ for all $i\in[k]$ and then use \Cref{lemma: sum freq arrays} to combine $O'$ with $h_\ell-h_{\ell+1}$ copies of $O$. This gives an outline array corresponding to $F^*$, where
    $$F^*(i,j) = \begin{cases}
        0 & \text{if $i=j$}\\
        h_ih_j & \text{otherwise.}
    \end{cases}$$
    Thus, there exists an $\LS(h_\ell^\ell h_{\ell+1}\dots h_k)$.
\end{proof}

This result proves the existence of many realizations without adding much restriction to the partitions. Although there are limited results for realizations, there is less known about incomplete latin squares.

An \emph{incomplete latin square} of side $n$ and type $h_1\dots h_k$, denoted $\ILS(n;h_1\dots h_k)$, is an order $n$ latin square with pairwise disjoint subsquares of orders $h_1,h_2,\dots, h_k$. A realization is thus an incomplete latin square with $n = \sum_{i=1}^kh_i$.

It is not hard to show that if $n\geq 2\sum_{i=1}^kh_i$, then an $\ILS(n;h_1\dots h_k)$ always exists. It is natural to ask: how close can $n$ get to $\sum_{i=1}^kh_i$ in general? Using our main result, we can significantly decrease this gap.

\begin{theorem}
    If $n\geq 2h_1+\sum_{i=1}^kh_i$, then there exists an $\ILS(n;h_1\dots h_k)$.    
\end{theorem}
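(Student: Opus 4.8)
The plan is to reduce the statement to the case $n = 2h_1 + \sum_{i=1}^k h_i$ exactly, since once an $\ILS(n_0; h_1\dots h_k)$ exists for $n_0 = 2h_1 + \sum h_i$, a larger $\ILS(n; h_1\dots h_k)$ is easily obtained: append one more part $h_{k+1} = n - n_0 \ge 0$ to the type, build a realization on $n$ points of the padded partition (so that $n = \sum_{i=1}^{k+1} h_i$ with the new part absorbing the slack), and then simply ignore the $(k+1)$-st subsquare. Actually it is cleaner to note directly that an $\ILS(n; h_1\dots h_k)$ with $n \ge n_0$ is the same thing as a realization of $(h_1\dots h_k\, g)$ where $g = n - \sum_{i=1}^k h_i \ge 2h_1$; so the theorem is equivalent to: a realization of $(h_1 h_2 \dots h_k g)$ exists whenever $g \ge 2h_1 \ge 2h_2 \ge \dots$ — i.e., whenever the largest part is at least twice every other part.

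So the real content is: \emph{for any partition $(g\, h_1 \dots h_k)$ with $g \ge 2h_i$ for all $i$, a realization exists}. Here I would invoke \Cref{thm: main}. The idea is to pad the "small" parts up to three equal parts of size $h_1$ (the largest of the $h_i$), which \Cref{thm: main} handles, and then reuse the blow-up / frequency-array machinery to turn one big block into the single part $g$. Concretely: first handle trivial/degenerate cases ($k \le 1$, or $g = \sum h_i$, etc.) via \Cref{thm: small k squares} and \Cref{thm: a^k square}. Otherwise, set $h_0 := h_1$ (the max of the small parts) and observe $g \ge 2h_0$. If $g \le (k-1)(h_0 + h_1)$ — roughly — apply \Cref{lemma: frequency array to add on}-style bounds; more directly, I would build the realization of $(g\, h_1 \dots h_k)$ as a sum of two outline arrays: one a reduction of a realization of $(h_0^3 h_1 \dots h_k)$ (padding with enough extra parts of size $h_0$, which exists by \Cref{thm: main} applied with $m = 3$ or larger) and one supplying the remaining frequency $g^2 - 9h_0^2$ (or the appropriate bookkeeping) in the diagonal-block positions corresponding to $g$, together with the off-diagonal cross terms $g h_i$ versus $3h_0 h_i$. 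The condition $g \ge 2h_0$ is exactly what is needed to split $g h_i$ as $\sum$ of pieces of the form used in \Cref{lemma: frequency array to add on} and to keep all entries nonnegative.

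In more detail, the cleanest route: by \Cref{thm: main} there is a realization of $(h_0^3 h_1 h_2 \dots h_k)$ whenever $h_0 = \max_i h_i$; reduce it modulo the partition that amalgamates the three $h_0$-blocks into a single block of order $3h_0$, obtaining an outline square for the partition $((3h_0) h_1 \dots h_k)$. Now I want to stretch the $3h_0$ block up to $g$. This is precisely the $\beta_1 = \beta_2 = 0$, $m=3$ instance of the blow-up principle behind \Cref{blow_up_3} and \Cref{lemma: frequency array to add on}: add $g - 3h_0 \ge 0$ copies of an outline array for a frequency array that has $0$ on the diagonal, puts $h_i$ in the row/column crossings between the big block and part $i$, and $0$ elsewhere — such an array exists by taking an $\LS(1^1 (h_i)_{i})$-type amalgamation exactly as in the proof of \Cref{lemma: frequency array to add on}, and the feasibility constraint there, $\sum_{\text{small}} h_i \le (\text{stuff}) \cdot h_{\text{big}}$, is implied by $g \ge 2h_0 \ge 2h_i$. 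Summing via \Cref{lemma: sum freq arrays} yields an outline square for $((g) h_1 \dots h_k)$ with the correct diagonal blocks, which lifts (by \Cref{outline to realization} / \Cref{outline rectangle to square}) to the desired realization, hence to the $\ILS$.

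The main obstacle I anticipate is the bookkeeping at the boundary: making sure that after reducing the realization of $(h_0^3 h_1 \dots h_k)$ and after adding the correcting outline arrays, every cell of the final frequency array equals $g h_i$ (off the $g$-row/column), $h_i h_j$ (between small parts), or $g^2$ (the big diagonal block), with no negative multiplicities — this is where the hypothesis $g \ge 2h_0$ must be used sharply, and where one has to verify the divisibility/partition condition of \Cref{lemma: frequency array to add on} in the regime $g \le (k-1)(h_0 + \text{next})$ versus complementing it with \Cref{jl_combined_construction} in the regime $g$ large, exactly paralleling the case split in the proof of \Cref{thm: main}. I would also need to dispatch the small-$k$ edge cases ($k = 1$: this is just $\LS(g\,h_1)$, which never exists for two parts — but then $n \ge 2h_1 + h_1 + g$ forces adding a third part, so really $k=1$ becomes a three-part realization and is fine; $k = 0$ is trivial) cleanly at the start so that \Cref{thm: main} always applies with at least three parts total.
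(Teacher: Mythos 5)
Your central reduction is where the argument breaks. You propose to prove the theorem by constructing a realization of $(h_1\dots h_k\,g)$ with $g=n-\sum_{i=1}^k h_i\ge 2h_1$, i.e.\ by forcing the $g$ leftover rows, columns and symbols to form a single subsquare of order $g$. An incomplete latin square does not require this, and the stronger object you are targeting frequently does not exist under the hypotheses of the theorem. For example, with all $h_i=b$ and $g$ large, $(g\,b^k)$ is an $\LS(a^1b^k)$, which by \Cref{thm: small k squares} and \Cref{thm: squaresatmost2} exists only when $g\le (k-1)b$; concretely $(6\,2\,2\,2)$ and $(100\,2\,2\,2\,2)$ admit no realization, yet $\ILS(12;2,2,2)$ and $\ILS(108;2,2,2,2)$ are instances of the theorem. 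Since $g$ can be arbitrarily large compared with $\sum_{i=1}^k h_i$, no amount of frequency-array bookkeeping can produce the nonexistent realization; moreover, the plan to ``stretch'' a single amalgamated block of order $3h_0$ up to order $g$ falls outside what \Cref{blow_up_3} and \Cref{lemma: frequency array to add on} provide, since those grow three equal blocks simultaneously (or add cross-terms among the first $m$ blocks), not one block whose order exceeds twice all the others. Your $k=1$ patch is symptomatic of the same issue and does not resolve it.

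The fix is to distribute the slack into many small parts rather than one big one. The paper writes $n-2h_1-\sum_{i=1}^k h_i=qh_k+r$ with $q\ge 0$ and $0\le r<h_k$, and invokes \Cref{thm: main} on the partition $(h_1^3h_2\dots h_k^{q+1}r)$: the two extra copies of $h_1$ make the largest part occur three times, the $q$ extra copies of $h_k$ together with the single part $r$ absorb the remaining slack without disturbing the ordering, and the resulting realization of a partition of $n$ contains disjoint subsquares of orders $h_1,\dots,h_k$, hence is the desired $\ILS(n;h_1\dots h_k)$. Your instinct to pad the partition and then apply \Cref{thm: main} is the right one; the error is in where you place the padding.
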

\begin{proof}
    Let $n-2h_1-\sum_{i=1}^kh_i = qh_k + r$, where $q\geq 0$ and $0\leq r< h_k$. By \Cref{thm: main}, there exists an $\LS(h_1^3h_2\dots h_k^{q+1}r)$. This is an $\ILS(n;h_1\dots h_k)$.
\end{proof}

The authors believe that the gap $n-\sum_{i=1}^kh_i$ could be further improved from $2h_1$ to $h_1$.

\backmatter

\bmhead{Acknowledgements}
Funding: This work was supported by The Australian Research Council, through the Centre of Excellence for Plant Success in Nature and Agriculture (CE200100015) and the first author would like to acknowledge the support of the Australian Government through a Research Training Program (RTP) Scholarship.

\printbibliography

\end{document}